\newif\ifignore 
\newcommand{\auxproof}[1]{
\ifignore\mbox{}\newline
\textbf{PROOF:} \dotfill\newline
{\it #1}\mbox{}\newline
\textbf{ENDPROOF}\dotfill
\fi}
  \DeclareMathAlphabet{\mathsl}{\encodingdefault}{\rmdefault}{\mddefault}{\sldefault}
  \SetMathAlphabet{\mathsl}{bold}{\encodingdefault}{\rmdefault}{\bfdefault}{\sldefault}
\newcommand{\QEDbox}{\square}
\newcommand{\QED}{\hspace*{\fill}$\QEDbox$}
\newcommand{\mathoverlap}[2]{\mathpalette\mathoverlap@{{#1}{#2}}}
\newcommand{\mathoverlap@}[2]{\mathoverlap@@{#1}#2}
\newcommand{\mathoverlap@@}[3]{\ooalign{$\m@th#1#2$\crcr\hidewidth$\m@th#1#3$\hidewidth}}
\newcommand{\klafter}{\mathbin{\mathoverlap{\circ}{\cdot}}}
\DeclareSymbolFont{T1op}{T1}{cmr}{m}{n}
\DeclareMathSymbol{\mathguilsinglleft}{\mathopen}{T1op}{'016}
\DeclareMathSymbol{\mathguilsinglright}{\mathclose}{T1op}{'017}
\newcommand{\idmap}[1][]{\ensuremath{\mathrm{id}_{#1}}}
\newcommand{\after}{\mathrel{\circ}}
\newcommand{\binomial}[1][]{\ensuremath{\mathsl{binom}[#1]}}
\newcommand{\multinomial}[1][]{\ensuremath{\mathsl{mulnom}[#1]}}
\newcommand{\Beta}{\ensuremath{\mathsl{B}}}
\newcommand{\betachan}{\ensuremath{\mathsl{beta}}}
\newcommand{\betabinomial}[1][]{\ensuremath{\mathsl{betabin}[#1]}}
\newcommand{\draw}{\ensuremath{\mathsl{D}}}
\newcommand{\drawdelete}{\ensuremath{\mathsl{DD}}}
\newcommand{\setin}[3]{\{#1\in#2\;|\;#3\}}
\newcommand{\supp}{\mathrm{supp}}
\newcommand{\ket}[1]{\ensuremath{|{\kern.1em}#1{\kern.1em}\rangle}}
\newcommand{\bigket}[1]{\ensuremath{\big|{\kern.1em}#1{\kern.1em}\big\rangle}}
\newcommand{\zero}{\ensuremath{\mathbf{0}}}
\newcommand{\intd}{{\kern.2em}\mathrm{d}{\kern.03em}}
\newcommand{\indic}[1]{\mathbf{1}_{#1}}
\newcommand{\dd}{\mathrm{d}}
\newcommand{\iid}{\mathsl{iid}}
\newcommand{\coin}{\mathsl{bern}}
\newcommand{\dnib}{\mathrel{\scalebox{-0.5}[1]{$\gg=$}}}
\newcommand{\distributionsymbol}{\mathcal{D}}
\newcommand{\Dst}{\distributionsymbol}
\newcommand{\multisetsymbol}{\mathcal{M}}
\newcommand{\Mlt}{\multisetsymbol}
\newcommand{\neMlt}{\multisetsymbol_{*}}
\newcommand{\Giry}{\mathcal{G}}
\newcommand{\Cat}[1]{\ensuremath{\mathbf{#1}}\xspace}
\newcommand{\Kl}{\mathcal{K}{\kern-.4ex}\ell}
\newcommand{\EM}{\mathcal{E}{\kern-.4ex}\mathcal{M}}
\newcommand{\Sets}{\Cat{Sets}}
\newcommand{\NNO}{\mathbb{N}}
\newcommand{\R}{\mathbb{R}}
\newcommand{\nnR}{\R_{\geq 0}}
\newcommand{\pR}{\R_{> 0}}
\newsavebox\sbpto
\savebox\sbpto{\begin{tikzpicture}[baseline=-2.5pt]
            \filldraw[fill=white,draw=white] circle (1.4pt);
            \filldraw[fill=white,draw=black,line width=0.2pt] circle (1.2pt);
                \end{tikzpicture}}
\newcommand\chanto{\mathrel{\ooalign{$\rightarrow$\cr
            \hfil\!$\usebox\sbpto$\hfil\cr}}}
\newsavebox\sbground
\savebox\sbground{\begin{tikzpicture}[circuit ee IEC,yscale=0.5,xscale=0.4]
                \draw (0,-2ex) to (0,0) node[ground,rotate=90,xshift=.65ex] {};
                \end{tikzpicture}}
\newcommand{\ie}{\textit{i.e.}\xspace}
\newcommand{\eg}{\textit{e.g.}\xspace}
\DeclareFixedFont{\ttb}{T1}{txtt}{bx}{n}{11} 
\DeclareFixedFont{\ttm}{T1}{txtt}{m}{n}{11}  
\definecolor{deepblue}{rgb}{0,0,0.5}
\definecolor{deepred}{rgb}{0.6,0,0}
\definecolor{deepgreen}{rgb}{0,0.5,0}
\definecolor{lightgray}{rgb}{0.83,0.83,0.83}
\newcommand\pythonstyle{\lstset{
backgroundcolor = \color{lightgray},
language=Python,
basicstyle=\ttm,
otherkeywords={self,>>>,...},             
keywordstyle=\ttb\color{deepblue},
emph={MyClass,__init__},          
emphstyle=\ttb\color{deepred},    
stringstyle=\color{deepgreen},
frame=tb,                         
showstringspaces=false            %
}}
\newcommand\pythoninline[1]{{\pythonstyle\lstinline!#1!}}
\begin{document}
\begin{frontmatter}
  \title{De Finetti's Construction as a Categorical Limit
  }


\author{Bart Jacobs\inst{1} \and Sam Staton\inst{2}}

\institute{Institute for Computing and Information Sciences,
\\ 
Radboud University, Nijmegen, NL
\\ 
\email{bart@cs.ru.nl}
\and
Department of Computer Science, 
\\
University of Oxford, UK
\\
\email{sam.staton@cs.ox.ac.uk}
}

\maketitle

\begin{abstract}
This paper reformulates a classical result in probability theory from
the 1930s in modern categorical terms: de Finetti's representation
theorem is redescribed as limit statement for a chain of finite spaces
in the Kleisli category of the Giry monad.
This new limit is used to
identify among exchangeable coalgebras the final one.
\end{abstract}

\end{frontmatter}


\renewcommand{\thepage}{\arabic{page}}

\section{Introduction}\label{IntroSec}

An indifferent belief about the bias of a coin can be expressed in
various ways.  One way is to use elementary sentences like
\begin{equation}
  \begin{array}{l}
    \textit{``When we toss the coin $7$ times, the probability of $5$ heads is $\tfrac 1 8$.''}
   \\\textit{``When we toss the coin $k$ times, the probability of $n$ heads is $\tfrac 1 {k+1}$.''}
    \end{array}
  \label{eqn:textb}
\end{equation}
Another way is to give a probability measure on the space~%
$[0,1]$ of biases:
\begin{equation}
  \textit{``The bias of the coin is uniformly distributed across the interval $[0,1]$.''}
  \label{eqn:texta}
\end{equation}

\noindent This sentence~\eqref{eqn:texta} is more conceptually
challenging than \eqref{eqn:textb}, but we may
deduce~\eqref{eqn:textb} from~\eqref{eqn:texta} using Lebesgue
integration:
\[ 
\displaystyle\int_0^1 \textstyle \binom 7 5r^5(1-r)^2 \ \dd r 
=
\tfrac 1 {8},
\qquad\qquad
\displaystyle\int_0^1 \textstyle \binom k nr^n(1-r)^{k-n} \ \dd r 
=
\tfrac 1 {k+1}
\]
where $\binom k n=\frac {k!}{n!(k-n)!}$ is the binomial coefficient.
Here we have considered indifferent beliefs as a first illustration,
but more generally de Finetti's theorem gives a passage from sentences
about beliefs like~\eqref{eqn:textb} to sentences
like~\eqref{eqn:texta}.

\begin{theorem}[De Finetti, Informal]
If we assign a probability to all finite experiment outcomes involving
a coin (such as~\eqref{eqn:textb}), and these probabilities are all
suitably consistent, then by doing so we have given a probability measure on $[0,1]$
(such as the uniform distribution, 
\eqref{eqn:texta}). 
\end{theorem}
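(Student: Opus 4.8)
\enspace The plan is to read the phrase ``suitably consistent probabilities on all finite experiments'' as the statement that we are handed a \emph{cone} in the Kleisli category $\Kl(\Giry)$ of the Giry monad, and to read ``we have thereby given a probability measure on $[0,1]$'' as the statement that $[0,1]$, suitably equipped, is the \emph{limit} of the chain underlying that cone. De Finetti's measure then appears as the unique mediating map into the limit, and its existence is the only genuinely non-formal ingredient.

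Concretely: since only exchangeable outcomes matter, the $k$-th finite coin experiment is recorded by a distribution $\omega_k\in\distribution{\{0,1,\ldots,k\}}$, with $\omega_k(n)$ the probability of $n$ heads among $k$ tosses. These spaces form a chain
\[
  \cdots \;\chanto\; \{0,\ldots,k+1\} \;\chanto\; \{0,\ldots,k\} \;\chanto\; \cdots \;\chanto\; \{0,1\} \;\chanto\; \{0\}
\]
in $\Kl(\Giry)$, whose connecting channels $\drawdelete$ ``delete one uniformly chosen toss'': from a state with $n$ heads among $k+1$ they land on $n-1$ with probability $\tfrac{n}{k+1}$ and on $n$ with probability $\tfrac{k+1-n}{k+1}$. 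The hypothesis that the probabilities are suitably consistent is exactly that $(\omega_k)_k$ forms a cone over this chain with apex the final object $\one$, i.e. $\drawdelete\after\omega_{k+1}=\omega_k$ for all $k$. The candidate limiting cone has apex $[0,1]$ (the space of coin biases) with legs the binomial channels $\binomial[k]\colon[0,1]\chanto\{0,\ldots,k\}$, $\binomial[k](r)(n)=\binom{k}{n}r^n(1-r)^{k-n}$; the theorem says precisely that every cone with apex $\one$ factors through this one by a unique channel $\mu\colon\one\chanto[0,1]$, equivalently a unique probability measure $\mu$ on $[0,1]$.

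The argument splits into three parts. \emph{(i) The binomial channels form a cone}, $\drawdelete\after\binomial[k+1]=\binomial[k]$: a routine computation in which the state $n$ downstairs collects contributions from $n$ and $n+1$ upstairs, and the absorption identities $\binom{k+1}{n}\tfrac{k+1-n}{k+1}=\binom{k}{n}=\binom{k+1}{n+1}\tfrac{n+1}{k+1}$ together with $r+(1-r)=1$ reduce them to $\binom{k}{n}r^n(1-r)^{k-n}$. \emph{(ii) A mediating map is unique}: if $\mu$ and $\mu'$ both induce the cone $(\omega_k)_k$ then $\int_0^1 r^n(1-r)^{k-n}\,\dd\mu(r)=\int_0^1 r^n(1-r)^{k-n}\,\dd\mu'(r)$ for all $n\le k$; taking $n=k$ shows the two measures agree on all monomials, hence on all polynomials, hence --- by the Weierstrass approximation theorem and the Riesz representation theorem --- $\mu=\mu'$ as Borel measures on $[0,1]$.

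\emph{(iii) A mediating map exists}, which is the main obstacle: from a consistent family $(\omega_k)_k$ one must manufacture $\mu\in\distribution{[0,1]}$ with $\omega_k=\binomial[k]\after\mu$. By exchangeability, $\omega_k(n)=\binom{k}{n}\,p(n,k-n)$, where $p(n,m)$ is the probability that a fixed set of $n$ among the first $n+m$ tosses come up heads and the rest tails, and where I write $a_n=p(n,0)$ for the all-heads probabilities. The cone equations are then equivalent to the recurrence $p(n,m+1)=p(n,m)-p(n+1,m)$, and iterating gives $p(n,m)=\sum_{j=0}^{m}(-1)^j\binom{m}{j}a_{n+j}$. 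The constraints $p(n,m)\ge 0$ say exactly that the sequence $(a_n)_{n\ge 0}$, which starts at $a_0=1$, is \emph{completely monotone}; Hausdorff's solution of the moment problem then yields a unique Borel probability measure $\mu$ on $[0,1]$ with $a_n=\int_0^1 r^n\,\dd\mu(r)$, whence $p(n,m)=\int_0^1 r^n(1-r)^m\,\dd\mu(r)$ and $\omega_k(n)=\binom{k}{n}\int_0^1 r^n(1-r)^{k-n}\,\dd\mu(r)=(\binomial[k]\after\mu)(n)$, as required. A more probabilistic route to the same $\mu$, which I would keep in reserve, is to take the weak-$*$ limit of the push-forwards of $\omega_k$ along the empirical-frequency maps $\{0,\ldots,k\}\to[0,1]$, $n\mapsto n/k$, using weak-$*$ compactness of $\distribution{[0,1]}$ together with a Chebyshev/law-of-large-numbers estimate showing that $\binomial[k](r)$ concentrates near $r$; either way it is this analytic step --- a classical moment theorem, or compactness plus concentration --- that does the work the categorical packaging cannot.
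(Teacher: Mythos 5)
Your proposal is correct and follows essentially the same route as the paper's proof of its Theorem~\ref{FinettiThm}: your quantities $p(n,m)$ and $a_n$ are exactly the paper's $\binom{n+m}{n}^{-1}\omega_{n+m}(n)$ and $\frak{b}_n=\omega_n(n)$, your iterated recurrence $p(n,m)=\sum_j(-1)^j\binom{m}{j}a_{n+j}$ is the paper's Claim~2, the identity $p(n,m)=\int_0^1 r^n(1-r)^m\,\mu(\dd r)$ is its Claim~1, and both arguments hinge on complete monotonicity plus Hausdorff's moment theorem. The only cosmetic differences are that you derive uniqueness via Weierstrass and Riesz where the paper simply invokes the uniqueness clause of Hausdorff's theorem, and that the paper additionally checks measurability of the mediating map for cones with an arbitrary apex $Y$, which the informal (apex $\one$) statement does not require.
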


In this paper we give two categorical / coalgebraic formulations of
this theorem.
\begin{itemize}
\item In Theorem~\ref{FinettiThm}, we state the result as: \emph{the
  unit interval $[0,1]$ is an inverse limit over a chain of finite
  spaces in a category of probability channels}. In particular, a cone
  over this diagram is an assignment of finite probabilities such
  as~\eqref{eqn:textb} that is suitably consistent.

\item In Theorem~\ref{FinalThm}, we state the result as: \emph{$[0,1]$
  carries a final object in a category of (exchangeable) coalgebras
  for the functor $2\times (-)$ in the category of probability
  channels.} A coalgebra can be thought of as a process that
  coinductively generates probabilities of the
  form~\eqref{eqn:textb}. A coalgebra is a little more data than
  strictly necessary, but this extra data corresponds to the concept
  of sufficient statistics which is useful in Bayesian inference.
\end{itemize}

\paragraph{Multisets and probability distributions.}
A multiset is a `set' in which elements may occur multiple times. We
can write such a multiset over a set $\{R,G,B\}$, representing red,
green and blue balls, as:
\[ 3\ket{R} + 5\ket{G} + 2\ket{B}. \]

\noindent This multiset can be seen as an urn containing three red,
five green and two blue balls.

A probability distribution is a convex combination of elements, where the 
frequencies (or probabilities) with which elements occur are numbers
in the unit interval $[0,1]$ that add up to one, as in:
\[ \textstyle\frac{3}{10}\ket{R} + \frac{1}{2}\ket{G} + \frac{1}{5}\ket{B}. \]

Obviously, distributions, whether discrete (as above) or continuous,
play a central role in probability theory.  Multisets also play a
crucial role, for instance, an urn with coloured balls is very
naturally represented as a multiset. Similarly, multiple data items in
learning scenarios are naturally captured by multisets (see
\eg~\cite{Jacobs19d}).  The two concepts are related, since any
multiset gives rise to a probability distribution, informally by
thinking of it as an urn and then drawing from it (sampling with
replacement).

This paper develops another example of how the interplay between
multisets and distributions involves interesting structure, in an
emerging area of categorical probability theory.

\paragraph{Beliefs as cones.}
In brief, following an experiment involving $K$ tosses of a coin,
we can set up an urn that approximately simulates the coin according to the frequencies that we have seen.
So if we see $5$ heads out of $7$ tosses, we put $5$ black balls into an urn together with $2$ white balls, so that the chance of black(=heads) is $\frac 57$.
We can describe a belief about a coin by describing the probabilities of ending up with
certain urns (multisets) after certain experiments. Clearly, if we draw and discard a ball from an urn,
then this is the same as stopping the experiment one toss sooner. 
So a consistent belief about our coin determines a cone over the chain 
\begin{equation}\label{eqn:chainintro}
  \xymatrix{\Mlt[1](2)&\ar[l]|-\circ\Mlt[2](2)&\ar[l]|-\circ \Mlt[3](2)&
    \ar[l]|-\circ\dots}
\end{equation}

\noindent in the category of finite sets and channels (aka probability
kernels). Here $\Mlt[K](2)$ is the set of multisets over $2$ of size
$K$, \ie~urns containing $K$ balls coloured black and white, and the
leftward arrows describe randomly drawing and discarding a ball. We
use the notation $\chanto$ for channels, a.k.a.\ Kleisli map, see
below for details.  Our formulation of de Finetti's theorem says that
the limit of this chain is~$[0,1]$. So to give a belief, \ie~a cone
over~\eqref{eqn:chainintro}, is to give a probability measure
on~$[0,1]$.  We set up the theory of multisets and cones in
Section~\ref{DrawSubsec}, proving the theorem in
Section~\ref{FinettiSec}.

\paragraph{Exchangeable coalgebras.}
In many situations, it is helpful to describe a belief about a coin in
terms of a stateful generative process, \ie~a channel (aka probability
kernel) $h\colon X\chanto 2\times X$, for some space~$X$, which gives
coinductively a probability for a sequence of outcomes.  This is a
coalgebra for the functor $2\times (-)$.  An important example is
P\'olya's urn, which is an urn-based process, but one where the urn
changes over time.  Another important example puts the carrier set
$X=[0,1]$ as the set of possible biases, and the process merely tosses
a coin with the given bias.  We show that $[0,1]$ carries the final
object among the `exchangeable' coalgebras.  Thus there is a canonical
channel $X\chanto [0,1]$, taking the belief described by a generative
process such as P\'olya's urn to a probability measure on $[0,1]$ such
as the uniform distribution.  We set up the theory of exchangeable
coalgebras in Section~\ref{PolyaSec}, proving the final coalgebra
theorem in Section~\ref{ExchangeSec} by using the limit reformulation
of de Finetti's theorem.

\section{Background on mulitsets and distributions}\label{BackgroundSec}
\paragraph{Multisets.}
A \emph{multiset} (or \emph{bag}) is a `set' in which elements may occur multiple
times.  We shall write $\Mlt(X)$ for the set of (finite) multisets with
elements from a set $X$. Elements $\varphi\in\Mlt(X)$ may be described
as functions $\varphi\colon X \rightarrow \NNO$ with finite support,
that is, with $\supp(\varphi) \coloneqq \setin{x}{X}{\varphi(x) \neq
  0}$ is finite.  Alternatively, multisets can be described as formal
sums $\sum_{i} n_{i}\ket{x_i}$, where $n_{i}\in\NNO$ describes the
multiplicity of $x_{i}\in X$, that is, the number of times that the
element $x_i$ occurs in the multiset. The mapping $X \mapsto \Mlt(X)$
is a monad on the category $\Sets$ of sets and functions. In fact,
$\Mlt(X)$ with pointwise addition and empty multiset $\zero$, is the
free commutative monoid on $X$.

Frequently we need to have a grip on the total number of elements
occurring in a multiset. Therefore we write, for $K\in\NNO$,
\[ \begin{array}{rcl}
\Mlt[K](X)
& \coloneqq &
\setin{\varphi}{\Mlt(X)}{\mbox{$\varphi$ has precisely $K$ elements}}
\\
& = &
\setin{\varphi}{\Mlt(X)}{\sum_{x\in X}\varphi(x) = K}.
\end{array} \]

\noindent Clearly, $\Mlt[0](X)$ is a singleton, containing only the
empty multiset $\zero$.  Each $\Mlt[K](X)$ is a quotient of the set of
sequences $X^K$ via an accumulator map:
\begin{equation}
\label{AccDiag}
\vcenter{\xymatrix{
X^{K} \ar[r]^-{\mathsl{acc_X}} & \Mlt[K](X)
\quad\mbox{with}\quad
\mathsl{acc}_X(x_{1}, \ldots, x_{K})
\coloneqq
\displaystyle\sum_{x\in X}\big(\textstyle \sum_{i} \indic{x_i}(x)\big)\bigket{x}
}}
\end{equation}

\noindent where $\indic{x_i}$ is the indicator function sending $x\in
X$ to $1$ if $x=x_{i}$ and to $0$ otherwise.  In particular, for
$X=2$, $\mathsl{acc}_2(b_1,\dots,b_K)={(\sum_{i}b_{i})\ket{1} +
  (K-\sum_{i}b_{i})\ket{0}}$.  The map $\mathsl{acc}_X$ is
permutation-invariant, in the sense that:
\[ \begin{array}{rcl}
\mathsl{acc}_X(x_{1}, \ldots, x_{K})
& = &
\mathsl{acc}_X(x_{\sigma(1)}, \ldots, x_{\sigma(K)})
\qquad\mbox{for each permutation }\sigma\in S_K.
\end{array} \]

\noindent Such permutation-invariance, or exchangeability, plays an important role in de Finetti's
work~\cite{Finetti30}, see the end of Section~\ref{ExchangeSec}.
 
\paragraph{Discrete probability.}
A \emph{discrete probability distribution} on a set $X$ is a finite
formal sum $\sum_{i}r_{i}\ket{x_i}$ of elements $x_{i}\in X$ with
multiplicity $r_{i}\in[0,1]$ such that $\sum_{i}r_{i} = 1$. It can
also be described as a function $X \rightarrow [0,1]$ with finite
support. We write $\Dst(X)$ for the set of distributions on $X$. This
gives a monad $\Dst$ on $\Sets$; it satisfies $\Dst(1) \cong 1$ and
$\Dst(2) \cong [0,1]$, where $1 = \{0\}$ and $2 = \{0,1\}$.

Maps in the associated Kleisli category $\Kl(\Dst)$ will be called
\emph{channels} and are written with a special arrow $\chanto$. Thus
$f\colon X \chanto Y$ denotes a function ${f\colon X \rightarrow
  \Dst(Y)}$. For a distribution $\omega\in\Dst(X)$ and a channel
$g\colon Y \chanto Z$ we define a new distribution $f \dnib \omega \in
\Dst(Y)$ and a new channel $g \klafter f \colon X \chanto Z$ by
Kleisli composition:
\[ \begin{array}{rclcrcl}
f \dnib \omega
& \coloneqq &
\displaystyle\sum_{y\in Y} \textstyle
   \Big(\sum_{x} \omega(x) \cdot f(x)(y)\Big)\bigket{y}
& \qquad &
\big(g \klafter f\big)(x)
& \coloneqq &
g \dnib f(x).
\end{array} \]

\paragraph{Discrete probability over multisets.}
For each number $K\in\NNO$ and probability
  $r\in [0,1]$ there is the familiar \emph{binomial} distribution
  $\binomial[K](r) \in \Dst(\{0,1,\ldots, K\})$. It captures
  probabilities for iterated flips of a known coin, and is given by the convex
  sum:
\[ \begin{array}{rcl}
\binomial[K](r)
& \coloneqq &
\displaystyle\sum_{0\leq k\leq K} \textstyle
   \binom{K}{k} \cdot r^{k} \cdot (1-r)^{K-k}\, \bigket{k}.
\end{array} \]

\noindent The multiplicity probability before $\ket{k}$ in this
expression is the chance of getting $k$ heads of out $K$ coin flips,
where each flip has fixed bias $r\in[0,1]$. In this way we obtain a channel
$\binomial[K] \colon [0,1] \chanto \{0,1,\ldots, K\}$.

More generally, one can define a multinomial channel:
\[ \xymatrix@C+1pc{
\Dst(X)\ar[rr]|-{\circ}^-{\multinomial[K]} & & \Mlt[K](X).
} \]

\noindent It is defined as:
\[ \begin{array}{rcl}
\multinomial[K](\omega)
& \coloneqq &
\displaystyle \sum_{\varphi\in\Mlt[K](X)}
   \frac{K!}{\prod_{x}\varphi(x)!}\cdot \textstyle
   {\displaystyle\prod}_{x} \omega(x)^{\varphi(x)} \bigket{\varphi}.
\end{array} \]

\noindent The binomial channel is a special case, since
$[0,1]\cong \Dst(2)$ and 
$\{0,1,\ldots,K\} \cong \Mlt[K](2)$, via $k \mapsto k\ket{0} +
(K-k)\ket{1}$. The binary case will play an important role in the
sequel. 


\section{Drawing from an urn}\label{DrawSubsec}
Recall our motivation. We have an unknown coin, and we are willing to
give probabilities to the outcomes of different finite experiments
with it. From this we ultimately want to say something about the coin.
The only experiments we consider comprise tossing a coin a fixed
number~$K$ times and recording the number of heads.  The coin has no
memory, and so the order of heads and tails doesn't matter.  So we can
imagine an urn containing black balls and white balls, and in an
experiment we start with an empty urn and put a black ball in the urn
for each head and a white ball for each tail.  The urn forgets the
order in which the heads and tails appear.  We can then describe our
belief about the coin in terms of our beliefs about the probability of
certain urns occurring in the different experiments. These
probabilities should be consistent in the following informal sense,
that connects the different experiments.
\begin{itemize}
  \item If we run the experiment with $K+1$ tosses, and then discard a random ball from the urn, it should be the same as running the experiment with just $K$ tosses. 
\end{itemize}
In more detail: because the coin tosses are exchangeable, that is, the
coin is memoryless, to run the experiment with $K+1$ tosses, is to run
the experiment with $K+1$ tosses and then report some chosen
permutation of the results. This justifies representing the outcome of
the experiment with an unordered urn. It follows that the statistics
of the experiment are the same if we choose a permutation of the
results at random. To stop the experiment after~$K$ tosses, i.e.~to
discard the ball corresponding to the last toss, is thus to discard a
ball unifomly at random.

As already mentioned: an urn filled with certain objects of the kind
$x_{1}, \ldots, x_{n}$ is very naturally described as a multiset
$\sum_{i}n_{i}\ket{x_i}$, where $n_{i}$ is the number of objects $x_i$
in the urn.  Thus an urn containing objects from a set $X$ is a
multiset in $\Mlt(X)$.  For instance, an urn with three black and two
white balls can be described as a multiset $3\ket{B} + 2\ket{W}\in
\Mlt(2)$, where $B$ stands for black and $W$ for white.  We can thus
formalise a belief about the $K$-th experiment, where we toss our coin
$K$ times, as a distribution $\omega_K\in \Dst(\Mlt[K](2))$, on urns
with $K$ black and white balls.

For the consistency property, we need to talk about drawing and
discarding from an urn.  We define a channel that draws a single item
from an urn:
\begin{equation}
\label{DrawDiag}
\xymatrix{
\Mlt[K+1](X)\ar[r]^-{\draw} & \Dst\Big(X\times\Mlt[K](X)\Big)
}
\end{equation}

\noindent It is defined as:
\[ \begin{array}{rcl}
\draw(\varphi)
& \coloneqq &
\displaystyle\sum_{x\in\supp(\varphi)}
\frac{\varphi(x)}{\sum_{y}\varphi(y)}\bigket{x, \varphi-1\ket{x}}.
\end{array} \]

\noindent Concretely, in the above example:
\[ \begin{array}{rcl}
\draw\big(3\ket{B} + 2\ket{W}\big)
& = &
\frac{3}{5}\bigket{B, 2\ket{B} + 2\ket{W}} + 
  \frac{2}{5}\bigket{W, 3\ket{B} + 1\ket{W}}.
\end{array} \]

\noindent Notice that on the right-hand-side we use brackets $\ket{-}$
inside brackets $\ket{-}$. The outer $\ket{-}$ are for $\Dst$ and the
inner ones for $\Mlt$.

We can now describe the `draw-and-delete' channel, by post-composing
$\draw$ with the second marginal:
\[ \xymatrix@C+1pc{
\Mlt[K+1](X)\ar[rr]^-{\drawdelete \,\coloneqq\, \Dst(\pi_{2}) \after \draw} 
   & & \Dst\Big(\Mlt[K](X)\Big)
} \]

\noindent Explicitly, $\drawdelete(\varphi) =
\sum_{x} \frac{\varphi(x)}{\sum_{y}\varphi(y)}\bigket{\varphi-1\ket{x}}$. 

Now the consistency property requires that the beliefs about the coin comprise
a sequence of distributions $\omega_K\in \Dst(\Mlt[K](2))$ such that
${(\drawdelete\dnib \omega_{K+1})=\omega_K}$. 
Thus they should form a cone over the sequence
\begin{equation}\label{eqn:chain}\xymatrix{
    \Mlt[0](2) &
    \Mlt[1](2) \ar[l]|-\circ_-\drawdelete&
    \Mlt[2](2) \ar[l]|-\circ_-\drawdelete&
    \dots \ar[l]|-\circ_-\drawdelete&
    \Mlt[K](2) \ar[l]|-\circ_-\drawdelete&\ar[l]|-\circ_-\drawdelete\dots}
\end{equation}

\noindent This means that for all $K$, 
\begin{equation}
  \label{eqn:drawdeleteconecondition}
  \omega_K(\varphi)\ =\ \textstyle \frac {\varphi(B)+1}{K+1}\cdot \omega_{K+1}(\varphi+\ket B) + \frac{\varphi(W)+1}{K+1}\cdot \omega_{K+1}(\varphi+\ket W)\text.
\end{equation}
\paragraph{Known bias is a consistent belief.}
If we know for sure that the bias of a coin is $r\in [0,1]$, then we
would put $\omega_K=\binomial[K](r)$. This is a consistent belief,
which is generalised in the following result to the multinomial
case. This cone forms the basis for this paper.
\begin{proposition}
\label{BinomChainProp}
The multinomial channels form a cone for the chain of draw-and-delete
channels, as in:
\begin{equation}
\label{BinomChainDiag}
\vcenter{\xymatrix{
\Mlt[0](X) 
   & \;\cdots\;\ar[l]|-{\circ}_-{\drawdelete} 
   & \Mlt[K](X)\ar[l]|-{\circ}_-{\drawdelete}  
   & \Mlt[K+1](X)\ar[l]|-{\circ}_-{\drawdelete}
   & \;\cdots\;\ar[l]|-{\circ}_-{\drawdelete}
\\
& & \Dst(X)\ar[u]|-{\circ}^-{\multinomial[K]}
   \ar@/_1ex/[ur]|-{\circ}_-{\quad\multinomial[K+1]\rlap{\qquad$\cdots$}}
   \ar@/^2ex/[ull]|-{\circ}^-{\multinomial[0]\quad}
}}
\end{equation}
\end{proposition}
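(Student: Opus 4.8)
The plan is to verify directly that for each $K$ the triangle
\[
\drawdelete \klafter \multinomial[K+1] \;=\; \multinomial[K]
\colon \Dst(X) \chanto \Mlt[K](X)
\]
commutes; by induction on the length of the chain this gives the full cone. Fix a distribution $\omega \in \Dst(X)$ and a target multiset $\psi \in \Mlt[K](X)$, and compute the multiplicity of $\ket{\psi}$ in $(\drawdelete \klafter \multinomial[K+1])(\omega)$. Unfolding Kleisli composition, this multiplicity is
\[
\sum_{\varphi \in \Mlt[K+1](X)}
\multinomial[K+1](\omega)(\varphi)\cdot \drawdelete(\varphi)(\psi).
\]
Since $\drawdelete(\varphi)(\psi) \neq 0$ only when $\varphi = \psi + \ket{x}$ for some $x \in X$, and in that case $\drawdelete(\psi+\ket{x})(\psi) = \frac{\psi(x)+1}{K+1}$, the sum collapses to a sum over $x \in X$:
\[
\sum_{x\in X}
\multinomial[K+1](\omega)\big(\psi+\ket{x}\big)\cdot \frac{\psi(x)+1}{K+1}.
\]

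The next step is to substitute the explicit multinomial formula. We have
\[
\multinomial[K+1](\omega)\big(\psi+\ket{x}\big)
= \frac{(K+1)!}{\prod_{y}(\psi+\ket{x})(y)!}\cdot\prod_{y}\omega(y)^{(\psi+\ket{x})(y)}
= \frac{(K+1)!}{(\psi(x)+1)\cdot\prod_{y}\psi(y)!}\cdot \omega(x)\cdot\prod_{y}\omega(y)^{\psi(y)},
\]
using $(\psi+\ket{x})(x)! = (\psi(x)+1)!=(\psi(x)+1)\cdot\psi(x)!$ and $(\psi+\ket{x})(y)=\psi(y)$ for $y\neq x$. Multiplying by $\frac{\psi(x)+1}{K+1}$ cancels the awkward factor $(\psi(x)+1)$ and turns $(K+1)!$ into $K!$, leaving
\[
\frac{K!}{\prod_{y}\psi(y)!}\cdot\Big(\prod_{y}\omega(y)^{\psi(y)}\Big)\cdot\omega(x).
\]
Summing over $x\in X$ produces a factor $\sum_{x}\omega(x) = 1$, and what remains is exactly $\multinomial[K](\omega)(\psi)$, as required.

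The calculation is entirely routine once the bookkeeping is set up; the only mild subtlety is keeping the two layers of Dirac brackets (one for $\Dst$, one for $\Mlt$) straight when unfolding $\drawdelete$, and noticing that the infinite-looking sum over $\varphi \in \Mlt[K+1](X)$ is in fact indexed by the support of $\psi$ together with the finitely many extra generators $x$ with $\omega(x)\neq 0$, so all sums are finite and no convergence issue arises. One should also note the base case $K=0$: both $\multinomial[0](\omega)$ and the composite land in $\Dst(\Mlt[0](X)) \cong 1$, so commutativity there is automatic. I do not expect a genuine obstacle; the identity $\frac{\psi(x)+1}{K+1}\cdot\multinomial[K+1](\omega)(\psi+\ket x) = \frac{1}{\text{(stuff)}}\cdot\omega(x)$ together with $\sum_x \omega(x)=1$ is the whole content. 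An alternative, slightly slicker route is to factor the multinomial through sequences: $\multinomial[K] = \Dst(\mathsl{acc}_X)\klafter (\text{i.i.d. sampling } X^K)$, and observe that $\drawdelete$ corresponds to forgetting the last coordinate of a sequence and then symmetrising, which is visibly compatible with i.i.d. sampling; but the direct computation above is short enough that the elementary route is preferable.
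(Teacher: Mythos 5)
Your proof is correct and takes essentially the same route as the paper's: unfold the Kleisli composite pointwise, reindex the sum over urns of the form $\psi+\ket{x}$ by $x\in X$, cancel the factor $\psi(x)+1$ against the factorial in the multinomial coefficient and $(K+1)!$ against $K+1$, and finish with $\sum_x\omega(x)=1$.
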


Our reformulation of the de Finetti theorem explains the sense in which 
above cone is a limit when $X=2$, see Theorem~\ref{FinettiThm}.

\begin{proof}
For $\omega\in\Dst(X)$ and $\varphi\in\Mlt[K](X)$ we have:
\[ \begin{array}[b]{rcl}
\lefteqn{\big(\drawdelete \klafter \multinomial[K+1]\big)(\omega)(\varphi)}
\\[+0.2em]
& = &
{\displaystyle\sum}_{\psi}\, \multinomial[K+1](\omega)(\psi) \cdot
   \drawdelete[K](\psi)(\varphi)
\\[+0.5em]
& = &
{\displaystyle\sum}_{x}\, \multinomial[K+1](\omega)(\varphi + 1\ket{x}) \cdot
   \frac{\varphi(x) + 1}{K+1}
\\[+0.5em]
& = &
{\displaystyle\sum}_{x}\, \frac{(K+1)!}{\prod_{y} (\varphi + 1\ket{x})(y)!}
   \cdot {\textstyle{\displaystyle\prod}_{y}} \omega(y)^{(\varphi + 1\ket{x})(y)} 
   \cdot \frac{\varphi(x) + 1}{K+1}
\\[+0.8em]
& = &
{\displaystyle\sum}_{x}\, \frac{K!}{\prod_{y} \varphi(y)!} \cdot 
   {\textstyle{\displaystyle\prod}_{y}} \omega(y)^{\varphi(y)} \cdot \omega(x) 
\\[+0.8em]
& = &
\displaystyle \frac{K!}{\prod_{y} \varphi(y)!} \cdot 
   \textstyle{\displaystyle\prod}_{y} \omega(y)^{\varphi(y)} \cdot 
   \big(\sum_{x} \omega(x)\big)
\\
& = &
\multinomial[K](\omega)(\varphi).
\end{array}\eqno{\QEDbox} \]
\end{proof}

\paragraph{Unknown bias (uniform) is a consistent belief.}
If we do not know the bias of our coin, it may be reasonable to say that all the $(K+1)$ outcomes in
$\Mlt[K](2)$ are equiprobable. 
This determines the discrete uniform distribution $\upsilon_K\in \Dst(\Mlt[K](2))$,
\[ \begin{array}{rcl}
  \upsilon_K
& \coloneqq &
\displaystyle \sum_{k=0}^K \textstyle \frac 1 {K+1} \bigket{k\ket B+(K-k)\ket W}.
\end{array} \]

\begin{proposition}
\label{UniChainProp}
The discrete uniform distributions $\upsilon_K$ form a cone for the
chain of draw-and-delete channels, as in:
\begin{equation}
\label{UniChainDiag}
\vcenter{\xymatrix{
\Mlt[0](X) 
   & \;\cdots\;\ar[l]|-{\circ}_-{\drawdelete} 
   & \Mlt[K](X)\ar[l]|-{\circ}_-{\drawdelete}  
   & \Mlt[K+1](X)\ar[l]|-{\circ}_-{\drawdelete}
   & \;\cdots\;\ar[l]|-{\circ}_-{\drawdelete}
\\
& & 1\ar[u]|-{\circ}^-{\upsilon_K}
   \ar@/_1ex/[ur]|-{\circ}_-{\quad\upsilon_{K+1}\rlap{\qquad$\cdots$}}
   \ar@/^2ex/[ull]|-{\circ}^-{\upsilon_{0}\quad}
}}
\end{equation}
\end{proposition}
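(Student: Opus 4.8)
The plan is to check directly that the $\upsilon_K$ assemble into a cone, that is, that $\drawdelete \dnib \upsilon_{K+1} = \upsilon_K$ for every $K\in\NNO$ (taking $X = 2$, on which the $\upsilon_K$ are defined). Rather than expanding $\drawdelete\dnib\upsilon_{K+1}$ from scratch, I would reuse the pointwise reformulation~\eqref{eqn:drawdeleteconecondition} of the cone equation derived above: it suffices to verify, for each $\varphi\in\Mlt[K](2)$, that
\[
  \upsilon_K(\varphi) \;=\; \frac{\varphi(B)+1}{K+1}\cdot \upsilon_{K+1}(\varphi+\ket B) \;+\; \frac{\varphi(W)+1}{K+1}\cdot \upsilon_{K+1}(\varphi+\ket W).
\]

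The computation is immediate once one records two features of the discrete uniform distributions. First, $\upsilon_K$ is constant on $\Mlt[K](2)$, which has $K+1$ elements, so $\upsilon_K(\varphi) = \tfrac{1}{K+1}$; likewise $\varphi+\ket B$ and $\varphi+\ket W$ both lie in $\Mlt[K+1](2)$, so they each get weight $\upsilon_{K+1}(\varphi+\ket B) = \upsilon_{K+1}(\varphi+\ket W) = \tfrac{1}{K+2}$. Second, $\varphi(B)+\varphi(W) = K$ since $\varphi$ has size $K$. Substituting into the right-hand side gives
\[
  \frac{\varphi(B)+1}{(K+1)(K+2)} + \frac{\varphi(W)+1}{(K+1)(K+2)} \;=\; \frac{K+2}{(K+1)(K+2)} \;=\; \frac{1}{K+1} \;=\; \upsilon_K(\varphi),
\]
as required.

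I do not expect any real obstacle: the verification is a one-line identity, and in particular no separate treatment of the boundary multisets (those with $\varphi(B)=0$ or $\varphi(W)=0$) is needed, since~\eqref{eqn:drawdeleteconecondition} already absorbs the support conditions of $\drawdelete$ into its coefficients. For context I would remark that there is also a more structural proof: $\upsilon_K$ is the pushforward of the uniform measure on $[0,1]$ along the binomial channel $\binomial[K]$ --- this is exactly the Beta integral $\int_0^1\binom{K}{k}r^{k}(1-r)^{K-k}\,\dd r = \tfrac{1}{K+1}$ from the introduction --- so cone-ness would follow from Proposition~\ref{BinomChainProp} by post-composition. That argument, however, presupposes the measure-theoretic (Giry) setup introduced only later, so in the present section the elementary discrete check is the cleaner option.
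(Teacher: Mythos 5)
Your proof is correct and is essentially the paper's own argument: the paper likewise expands $\drawdelete\dnib\upsilon_{K+1}$ pointwise, identifies the two contributing multisets $\varphi+\ket B$ and $\varphi+\ket W$ with weights $\tfrac{1}{K+2}$ and draw probabilities $\tfrac{\varphi(B)+1}{K+1}$, $\tfrac{\varphi(W)+1}{K+1}$, and concludes via $\varphi(B)+\varphi(W)=K$. Your use of~\eqref{eqn:drawdeleteconecondition} is just a repackaging of the same computation, and your closing remark about factoring through the binomial cone is a nice observation but, as you note, not needed here.
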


\begin{proof}
For $\varphi\in\Mlt[K](2)$ we have:
\[ \begin{array}[b]{rcl}
\big(\drawdelete \dnib \upsilon_{K+1}\big)(\varphi)
& = &
{\displaystyle\sum}_{\psi}\, \upsilon_{K+1}(\psi) \cdot
   \drawdelete[K](\psi)(\varphi)
\\[+0.2em]
& = &
\upsilon_{K+1}(\varphi+\ket W) \cdot
   \drawdelete[K](\varphi+\ket W)(\varphi)
      \; +
\\
& & \qquad
\upsilon_{K+1}(\varphi+\ket B) \cdot
   \drawdelete[K](\varphi + \ket B)(\varphi)
\\[+0.5em]
& = &
      \frac 1 {K+2}\cdot
      \frac {\varphi(W)+1}{K+1}
      +
      \frac 1 {K+2}\cdot
      \frac {\varphi(B)+1}{K+1}
\\[+0.5em]
& = &
      \frac {\varphi(W)+\varphi(B)+2} {(K+2)(K+1)}
\\[+0.2em]
& = &
      \frac {1} {K+1}\qquad\text{since $\varphi(W)+\varphi(B)=K$.}
      \end{array} \eqno{\QEDbox} \]
\end{proof}

\section{Coalgebras and P\'olya's urn}\label{PolyaSec}

In the previous section we have seen two examples of cones over the
chain~\eqref{eqn:chain}: one for a coin with known bias
(Proposition~\ref{BinomChainProp}), and one where the bias of the coin
is uniformly distributed (Proposition~\ref{UniChainProp}).

In practice, it is helpful to describe our belief about the coin in
terms of a data generating process.  This is formalised as a channel
$h\colon X\chanto 2\times X$, \ie~as a coalgebra for the functor
$2\times (-)$ on the category of channels.  The idea is that our
belief about the coin is captured by some parameters, which are the
states of the coalgebra~$x\in X$. The distribution $h(x)$ describes
our belief about the outcome of the next coin toss, but also provides
a new belief state according to the outcome. For example, if we have
uniform belief about the bias of the coin, we predict the outcome to
be heads or tails equally, but afterward the toss our belief is
refined: if we saw heads, we believe the coin to be more biased
towards heads.

P\'olya's urn is an important example of such a coalgebra.  We
describe it as a multiset over $2 = \{0,1\}$ with a certain extraction
and re-fill operation. The number $0$ corresponds to black balls, and
$1$ to white ones, so that a multiset $\varphi\in\Mlt(2)$ captures an
urn with $\varphi(0)$ black and $\varphi(1)$ white ones. When a ball
of color $x\in 2$ is extracted, it is put back together with an extra
ball of the same color.  We choose to describe it as a coalgebra on
the set $\neMlt(2)$ of non-empty multisets over $2$.  Explicitly,
$\textsl{pol}\colon \neMlt(2)\chanto 2\times \neMlt(2)$ is given by:
\[ \begin{array}{rcl}
\textsl{pol}(\varphi)
& \coloneqq &
\displaystyle
\frac{\varphi(0)}{\varphi(0)+\varphi(1)}\bigket{0, \varphi + 1\ket{0}} +
   \frac{\varphi(1)}{\varphi(0)+\varphi(1)}\bigket{1, \varphi + 1\ket{1}}.
   \end{array} \]

\noindent So the urn keeps track of the outcomes of the coin tosses so
far, and our belief about the bias of the coin is updated according to
successive experiments.

\subsection{Iterating draws from coalgebras}\label{sec:iteration-coalg}

By iterating a coalgebra $h\colon X\chanto 2\times X$, we can simulate
the experiment with $K$ coin tosses $h_K\colon X\chanto \Mlt[K](2)$,
by repeatedly applying~$h$.  Formally, we build a sequence of channels
$h^K\colon X\chanto \Mlt[K](2)\times X$ by $h^0(x)\coloneqq(\zero,x)$
and:
\[  h^{K+1}\coloneqq\xymatrix@C-3pt{X\ar[r]|-\circ^-{h^K}& \Mlt[K](2)\times X\ar[r]|-\circ ^-{\idmap\times h}& 
   \Mlt[K](2)\times 2\times X\ar[r]^{\mathsl{add}\times\idmap}&\Mlt[K+1](2)\times X}\]

\noindent where the rightmost map $\mathsl{add}\colon \Mlt[K](2)\times
2\to \Mlt[K+1](2)$ is $\mathsl{add}(\varphi,b)=\varphi+1\ket b$.

(Another way to see this is to regard a coalgebra $h\colon X\chanto
2\times X$ in particular as coalgebra $X\chanto \Mlt(2)\times X$. Now
$\Mlt(2)\times -$ is the writer monad for the commutative monoid
$\Mlt(2)$, i.e.~the monad for $\Mlt(2)$-actions. Iterated Kleisli composition for this monad gives a
sequence of maps $h\colon X\chanto \Mlt(2)\times X$.)

For example, starting with P\'olya's urn $\textsl{pol}\colon
\neMlt(2)\chanto 2\times \neMlt(2)$, we look at a few iterations:
\begin{equation}\label{eqn:polyasteps} 
\begin{array}{rcl}
\big(\mathsl{pol}\big)^{0}(b\ket{0} + w\ket{1})
& = &
1\bigket{\zero, b\ket{0} + w\ket{1}}
\\
\big(\mathsl{pol}\big)^{1}(b\ket{0} + w\ket{1})
& = &
\mathsl{pol}(b\ket{0} + w\ket{1})
\\
& = &
\frac{b}{b+w}\bigket{1\ket{0}, \varphi + \ket{0}} +
   \frac{w}{b+w}\bigket{1\ket{1}, \varphi + \ket{1}}
\\
\big(\mathsl{pol}\big)^{2}(b\ket{0} + w\ket{1})
& = &
\frac{b}{b+w}\cdot\frac{b+1}{b+w+1}\bigket{2\ket{0}, (b+2)\ket{0} + w\ket{1}}
\\
& & \; + \,
\frac{b}{b+w}\cdot\frac{w}{b+w+1}\bigket{1\ket{0} + 1\ket{1}, 
   (b+1)\ket{0} + (w+1)\ket{1}}
\\
& & \; + \,
\frac{w}{b+w}\cdot\frac{b}{b+w+1}\bigket{1\ket{0} + 1\ket{1}, 
   (b+1)\ket{0} + (w+1)\ket{1}}
\\
& & \; + \,
\frac{w}{b+w}\cdot\frac{w+1}{b+w+1}\bigket{2\ket{1}, b\ket{0} + (w+2)\ket{1}}
\\[+0.4em]
& = &
\frac{b(b+a)}{(b+w)(b+w+1)}\bigket{2\ket{0}, (b+2)\ket{0} + w\ket{1}}
\\
& & \; + \,
\frac{2bw}{(b+w)(b+w+1)}\bigket{1\ket{0} + 1\ket{1}, 
   (b+1)\ket{0} + (w+1)\ket{1}}
\\
& & \; + \,
\frac{w(w+1)}{(b+w)(b+w+1)}\bigket{2\ket{1}, b\ket{0} + (w+2)\ket{1}}.
\end{array} \end{equation}

\noindent It is not hard to see that we get as general formula,
for $K\in\NNO$:
\[ \begin{array}{rcl}
\big(\mathsl{pol}\big)^{K}(b\ket{0} + w\ket{1})
& = &
\displaystyle\!\!\sum_{0 \leq k \leq K} \textstyle
   \!\binom{K}{k} \cdot \!\displaystyle
   \frac{\prod_{i<k}(b+i) \cdot \prod_{i<K-k}(w+i)}{\prod_{i<K}(b+w+i)}
\\
& & \hspace*{0.5em}
   \bigket{k\ket{0} \!+\! (K-k)\ket{1}, \, 
   (b \!+\! k)\ket{0} + (w \!+\! (K\!-\!k))\ket{1}}.
\end{array} \]

\noindent We are especially interested in the first component of
$h^K(x)$, which is in $\Mlt[K](2)$. Thus we define the channels:
\[ h_K\,\coloneqq\,
  \Big(\xymatrix@R-0.5pc{
X\ar[r]^-{\mathsl{h}^K}|-{\circ} 
& \Mlt[K](2)\times X
\ar[r]^-{\pi_1}&
 \Mlt[K](2)}\Big)\]

For the P\'olya urn $\mathsl{pol}$, the really remarkable fact about
$\mathsl{pol}_K\colon \neMlt(2)\chanto \Mlt[K](2)$ is that if we start
with the urn with one white ball and one black ball, these
distributions on $\Mlt[K](2)$ that come from iteratively drawing are
uniform distributions:
\[ \begin{array}{rcl}
\mathsl{pol}_K(1\ket 0 + 1\ket 1)
& = &
\upsilon_K\text.
\end{array} \]

More generally:

\begin{proposition}
\label{PolyaIteratedProp}
\begin{equation}
\label{PolyaIteratedDivEqn}
\begin{array}{rcl}
\lefteqn{\mathsl{pol}_K(b\ket 0 + w\ket 1)\big(k\ket{0} + (K-k)\ket{1}\big)}
\\[+0.3em]
& = &
\binom{K}{k} \cdot \displaystyle
   \frac{(b + k - 1)!}{(b- 1)!} \cdot
   \frac{(w+ K - k - 1)!}{(w- 1)!} \cdot
   \frac{(b+ w- 1)!}
   {(b+ w+ K - 1)!}.
\end{array}
\end{equation}
\end{proposition}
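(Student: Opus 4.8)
The plan is to first establish, by induction on $K$, the closed formula for $(\mathsl{pol})^{K}(b\ket 0 + w\ket 1)$ that is displayed just above, and then to read off the proposition by taking the first marginal and rewriting the products of consecutive integers as ratios of factorials. For $K=0$ both sides reduce to $1\bigket{\zero,\, b\ket 0 + w\ket 1}$, using that the empty product is $1$ and $\binom{0}{0}=1$.

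For the inductive step I would unfold the recursive definition of $(\mathsl{pol})^{K+1}$ in terms of $(\mathsl{pol})^{K}$ given above, and extract the coefficient of the pair $\bigket{k\ket 0 + (K{+}1{-}k)\ket 1,\; (b{+}k)\ket 0 + (w{+}K{+}1{-}k)\ket 1}$. The key structural observation is that in $\mathsl{pol}$ the drawn colour is added both to the accumulated multiset (via $\mathsl{add}$) and to the urn, so the urn reached together with a given multiset $k\ket 0+(K{+}1{-}k)\ket 1$ is forced to be $(b{+}k)\ket 0 + (w{+}K{+}1{-}k)\ket 1$; hence no genuine summation over states survives, and the coefficient is a sum of exactly two terms. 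The first comes from the branch sitting after $K$ steps at $\big((k{-}1)\ket 0 + (K{+}1{-}k)\ket 1,\, (b{+}k{-}1)\ket 0 + (w{+}K{+}1{-}k)\ket 1\big)$ and then drawing colour $0$, with probability $\tfrac{b+k-1}{b+w+K}$; the second from the branch at $\big(k\ket 0 + (K{-}k)\ket 1,\, (b{+}k)\ket 0 + (w{+}K{-}k)\ket 1\big)$ drawing colour $1$, with probability $\tfrac{w+K-k}{b+w+K}$. Substituting the induction hypothesis into each: the factor $b+w+K$ upgrades $\prod_{i<K}(b{+}w{+}i)$ to $\prod_{i<K+1}(b{+}w{+}i)$ in both denominators; the factor $b+k-1$ upgrades $\prod_{i<k-1}(b{+}i)$ to $\prod_{i<k}(b{+}i)$ in the first numerator, while the factor $w+K-k$ upgrades $\prod_{i<K-k}(w{+}i)$ to $\prod_{i<K-k+1}(w{+}i)$ in the second. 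The two resulting terms then share the common factor $\prod_{i<k}(b{+}i)\cdot\prod_{i<(K+1)-k}(w{+}i)\,/\,\prod_{i<K+1}(b{+}w{+}i)$, so Pascal's rule $\binom{K}{k-1}+\binom{K}{k}=\binom{K+1}{k}$ completes the step; the boundary values $k=0$ and $k=K+1$ are the same computation with one of the two branches absent.

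Once the formula for $(\mathsl{pol})^{K}$ is in hand, the channel $\mathsl{pol}_K = \Dst(\pi_1)\after(\mathsl{pol})^{K}$ merely extracts the coefficient of $\ket{k\ket 0 + (K-k)\ket 1}$; since the urn component is determined by $k$, the push-forward along $\pi_1$ does not merge distinct summands, and this coefficient is exactly $\binom{K}{k}\cdot\frac{\prod_{i<k}(b+i)\cdot\prod_{i<K-k}(w+i)}{\prod_{i<K}(b+w+i)}$. Finally I would rewrite $\prod_{i<k}(b+i)=b(b{+}1)\cdots(b{+}k{-}1)=\tfrac{(b+k-1)!}{(b-1)!}$, and similarly $\prod_{i<K-k}(w+i)=\tfrac{(w+K-k-1)!}{(w-1)!}$ and $\prod_{i<K}(b+w+i)=\tfrac{(b+w+K-1)!}{(b+w-1)!}$, which turns the expression into the stated right-hand side; this uses $b,w\geq 1$, which holds because we work over $\neMlt(2)$ and the urn only ever grows.

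The only real obstacle is the bookkeeping in the inductive step: identifying the two incoming branches correctly and checking that, after the three index shifts, the remaining products line up so that Pascal's rule applies to a genuinely common factor. Everything else is routine algebra.
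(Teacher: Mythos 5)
Your proposal is correct and follows essentially the same route as the paper: the paper's proof simply takes the general formula for $(\mathsl{pol})^{K}$ displayed just before the proposition (asserted there as ``not hard to see'') and rewrites the products $\prod_{i<k}(b+i)$, $\prod_{i<K-k}(w+i)$, $\prod_{i<K}(b+w+i)$ as ratios of factorials, exactly as you do in your final step. The only difference is that you additionally supply the induction (with Pascal's rule) establishing that iteration formula, which the paper leaves unproved; your inductive step and the observation that the urn component is determined by the accumulated multiset are both correct.
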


\begin{proof}
We start from the $K$-th iteration formula for $\mathsl{pol}$, given
just before Proposition~\ref{PolyaIteratedProp}, and derive:
\[ \begin{array}[b]{rcl}
\lefteqn{\displaystyle
\frac{\prod_{i<k}(b+i) \cdot \prod_{i<K-k}(w+i)}{\prod_{i<K}(b+w+i)}}
\\[+1em]
& = &
\displaystyle
   \frac{(b+ k - 1)!}{(b- 1)!} \cdot
   \frac{(w+ K -k - 1)!}{(w - 1)!} \cdot
   \frac{(b+ w- 1)!}
   {(b+ w+ K - 1)!}.
\end{array} \eqno{\QEDbox} \]
\end{proof}
From this characterization, we can deduce that
the channels $\mathsl{pol}_K\colon \neMlt(2)\chanto \Mlt[K](2)$
form a cone over the draw-and-delete chain \eqref{eqn:chain}, \ie~that
$\drawdelete\klafter \mathsf{pol}_{K+1}=\mathsf{pol}_K$, 
by a routine calculation.

In what follows we provide two other ways to see that $\mathsl{pol}_K$
forms a cone.  First we deduce it using the fact that the coalgebra
$\mathsl{pol}$ is exchangeable (Lemma~\ref{ExchCoalgLem}).  Second we
deduce it by noticing that $\mathit{pol}_K(b\ket 0+w\ket 1)(k\ket
0+(K-k)\ket 1)$ is distributed as $\betabinomial[K] (b,w)$, the beta
binomial distribution, and hence factors through the binomial cone
(Proposition~\ref{BinomChainProp}) via the beta channel
(Lemma~\ref{BetaBinomLem}).

\subsection{Exchangeable coalgebras and cones}

We would like to focus on those coalgebras $h\colon X\chanto 2\times
X$ that describe consistent beliefs, \ie~for which $h_K\colon X\chanto
\Mlt[K](2)$ is a cone over the chain~\eqref{eqn:chain}:
\begin{equation}\label{eqn:cone}
  \drawdelete \klafter h_{K+1}=h_K\text.
\end{equation}

\noindent For example, P\'olya's urn forms a cone.  But the
deterministic alternating coalgebra
\begin{equation}
  \label{eqn:altcoalg}
  \mathsl{alt}:2\chanto 2\times 2\qquad 
  \mathsl{alt}(b)\coloneqq 1\ket {b,\neg b}
\end{equation}

\noindent does not form a cone, since for instance
$\mathsl{alt}_1(0)=1\ket 0$, while $\drawdelete\klafter
\mathsl{alt}_2(0)=\frac 12 \ket 0+\frac 12 \ket 1$.

The following condition is inspired by de Finetti's notion of
exchangeability, and gives a simple coinductive criterion for a
coalgebra to yield a cone~\eqref{eqn:cone}. It is also related to
ideas from probabilistic programming (\eg~\cite{Xrp2016}).
\begin{definition}
\label{ExchCoalgDef}
A coalgebra $h\colon X\chanto 2\times X$ will be called \emph{exchangeable} if
its outputs can be reordered without changing the statistics, as
expressed by commutation of the following diagram.
\begin{equation}
\label{eqn:exch}
\vcenter{\xymatrix@R-1.5pc@C+1pc{
& 2\times X\ar[r]^-{\idmap\otimes h}|-{\circ} & 
   2\times 2\times X\ar[dd]^-{\mathsl{swap}\times\idmap[X]}_-{\cong}
\\
X\ar@/^2ex/[ur]^-{h}|-{\circ}\ar@/_2ex/[dr]_-{h}|-{\circ}
\\
& 2\times X\ar[r]_-{\idmap\otimes h}|-{\circ} & 2\times 2\times X
}}
\end{equation}
\end{definition}

Returning to P\'olya's urn coalgebra $\mathsl{pol}\colon
\neMlt(2)\chanto 2\times \neMlt(2)$, we see that it is exchangeable,
for, by a similar argument to~\eqref{eqn:polyasteps},
\[\begin{array}{rcl}
    (\idmap\times \mathsl{pol})(\mathsl{pol}(b\ket 0+w\ket 1))=
    & = &
          \frac{b}{b+w}\cdot\frac{b+1}{b+w+1}\bigket{(0,0), (b+2)\ket{0} + w\ket{1}}
    \\
& & \; + \,
    \frac{b}{b+w}\cdot\frac{w}{b+w+1}\bigket{(0,1), 
    (b+1)\ket{0} + (w+1)\ket{1}}
    \\
    & & \; + \,
        \frac{w}{b+w}\cdot\frac{b}{b+w+1}\bigket{(1,0), 
        (b+1)\ket{0} + (w+1)\ket{1}}
    \\
    & & \; + \,
        \frac{w}{b+w}\cdot\frac{w+1}{b+w+1}\bigket{(1,1), b\ket{0} + (w+2)\ket{1}}
\end{array}\]

\noindent Reordering the results does not change the statistics,
because $b\cdot w=w\cdot b$.  On the other hand, the deterministic
alternating coalgebra $\mathsl{alt}\colon 2\chanto 2\times 2$
from~\eqref{eqn:altcoalg} is not exchangeable, since:
\[\begin{array}{rcl}
  (\idmap\times \mathsl{alt})(\mathsl{alt}(0))&=&
                                                 \ket{((0,1),0)}\\
  (\mathsl{swap}\times 2)((\idmap\times \mathsl{alt})(\mathsl{alt}(0)))&=&
                                                 \ket{((1,0),0)}                                           \text.      
  \end{array}\]

Before we state and prove our theorem about exchangeable coalgebras,
we also consider a different sequence, which keeps track of the order
of draws.  We define a sequence $h^{\sharp K}\colon X\chanto 2^K\times
X$, with $h^{\sharp 0}(x)=((),x)$ and $h^{\sharp (K+1)}(x)
=(2^K\otimes h)\dnib h^{\sharp K}(x)$.  Then we define channels
$h^\sharp_K:X\chanto 2^K$ by
\begin{equation}
  \label{eqn:channelsequence}
  h^\sharp_K(x)
  \coloneqq \Dst(\pi_{1})(h^{\sharp K}(x))
\end{equation}

\noindent These constructions are related similar to the earlier
constructions $h^K$ and $h_K$ by
\[ \xymatrix@R-20pt{
h^K \;=\; \Big(X\ar[r]^-{h^{\sharp K}}|-{\circ} &
2^{K}\times X\ar[r]^-{\mathsl{acc}\times X} & \Mlt[K](2)\times X\Big)
\\
    h_{K} \;=\; \Big(X\ar[r]^-{h^{\sharp}_{K}}|-{\circ} &
   2^{K}\ar[r]^-{\mathsl{acc}} & \Mlt[K](2)\Big),
} \]
\noindent where $\mathsl{acc} \colon 2^{K} \rightarrow \Mlt[K](2)$ is
the accumulator map from~\eqref{AccDiag}.  The difference is that the
$h^{\sharp}$ variants keep track of the order of draws, which will be
useful in our arguments about exchangeability, since $h$ is
exchangeable (Definition~\ref{ExchCoalgDef}) if and only if
$h^{\sharp2}=\Dst(\mathsl{swap}\times X)\circ h^{\sharp2}:X\chanto
2\times 2\times X$.

\begin{lemma}
\label{ExchCoalgLem}
Let $h\colon X \chanto 2\times X$ be an exchangeable coalgebra. 
Then 
\begin{enumerate}
\item the map $h_{K}$ can be expressed as:
\begin{equation}
\label{ExchCoalgEqn}
\begin{array}{rcl}
h_{K}(x)
& = &
\displaystyle\sum_{k=0}^{K} \textstyle \binom{K}{k}\cdot 
   h^{\sharp}_{K}(x)\big(\,\underbrace{1,\ldots,1}_{k\text{ times}}, \,
   \underbrace{0,\ldots,0}_{K-k\text{ times}}\big) \bigket{k\ket 1+(K-k)\ket 0}.
\end{array}
\end{equation}

\item the collection of maps $(h_{K})$ forms a cone for the chain of
  draw-and-delete maps $\drawdelete$ \eqref{eqn:chain}.
\end{enumerate}
\end{lemma}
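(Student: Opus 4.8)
The plan is to prove part~(1) first, as part~(2) will follow from it together with the binomial cone (Proposition~\ref{BinomChainProp}) --- or, alternatively, by a direct coinductive argument. For part~(1), the key observation is that the accumulator map $\mathsl{acc}\colon 2^K \to \Mlt[K](2)$ is constant on permutation orbits, so that $h_K = \mathsl{acc} \klafter h^\sharp_K$ collapses the distribution $h^\sharp_K(x) \in \Dst(2^K)$ by summing over fibres: $h_K(x)\big(k\ket 1 + (K-k)\ket 0\big) = \sum_{s \in \mathsl{acc}^{-1}(\cdots)} h^\sharp_K(x)(s)$. There are exactly $\binom K k$ sequences $s \in 2^K$ with $k$ ones, so the claimed formula~\eqref{ExchCoalgEqn} is equivalent to the assertion that $h^\sharp_K(x)(s)$ depends only on the number of ones in $s$, \ie~that $h^\sharp_K(x)$ is an exchangeable (permutation-invariant) distribution on $2^K$. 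This is the crux, and I would prove it by induction on $K$, using the hypothesis that $h$ is exchangeable in the equivalent form noted just before the lemma, namely $h^{\sharp 2} = \Dst(\mathsl{swap}\times X)\after h^{\sharp 2}$.

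For the induction I would first record the recursive description $h^{\sharp(K+1)}(x) = (2^K \otimes h)\dnib h^{\sharp K}(x)$, which prepends/appends one further draw while carrying the updated state. The inductive step requires showing that swapping any two adjacent coordinates in $2^{K+1}$ leaves $h^{\sharp(K+1)}(x)$ fixed; since adjacent transpositions generate $S_{K+1}$, this suffices. A swap of two coordinates \emph{both} lying in the first $K$ positions is handled by the induction hypothesis applied to $h^{\sharp K}$ (the extra final draw $h$ is applied to the state, which is untouched by such a permutation, so it commutes through). The swap of the last two coordinates --- positions $K$ and $K+1$ --- is exactly where the base-case exchangeability of $h$ enters: locally this is an instance of $h^{\sharp 2} = \Dst(\mathsl{swap}\times X)\after h^{\sharp 2}$ applied to the state reached after $K-1$ draws, tensored with the identity on the first $K-1$ coordinates. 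Combining the two kinds of adjacent swaps gives full $S_{K+1}$-invariance.

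Granting part~(1), part~(2) can be obtained as follows. From~\eqref{ExchCoalgEqn} the distribution $h_K(x)$ on $\Mlt[K](2)$ has the shape of a ``generalised binomial'': writing $p_K(s)$ for the common value $h^\sharp_K(x)(s)$ on sequences $s$ with $k$ ones, one checks using the recursion for $h^{\sharp(K+1)}$ that these numbers satisfy the Pascal-type identity forcing the draw-and-delete cone condition~\eqref{eqn:drawdeleteconecondition}; concretely, $\drawdelete \klafter h_{K+1}$ evaluated at $\varphi = k\ket 1 + (K-k)\ket 0$ picks up contributions from $\varphi + \ket 1$ and $\varphi + \ket 0$ with weights $\tfrac{k+1}{K+1}$ and $\tfrac{K-k+1}{K+1}$, and substituting~\eqref{ExchCoalgEqn} this telescopes --- the binomial coefficients absorb the weights via $\binom{K+1}{k+1}\tfrac{k+1}{K+1} = \binom{K}{k}$ and $\binom{K+1}{k}\tfrac{K-k+1}{K+1} = \binom{K}{k}$ --- to $\binom K k\big(p_{K+1}(1^{k+1}0^{K-k}) + p_{K+1}(1^{k}0^{K-k+1})\big)$, and the bracket equals $p_K(1^k 0^{K-k})$ by marginalising the last draw out of $h^{\sharp(K+1)}(x)$. (Alternatively, one invokes that $\mathsl{acc}$ is a cone morphism from the sequence-prefix chain to the draw-and-delete chain and that $(h^\sharp_K)$ is automatically a cone for the prefix chain by construction, then transports along $\mathsl{acc}$.) I expect the main obstacle to be the bookkeeping in the inductive step of part~(1): making precise that ``swap the last two draws'' is genuinely an instance of the base-case exchangeability diagram~\eqref{eqn:exch} tensored with an identity, rather than something that needs its own argument --- once that is pinned down, everything else is routine.
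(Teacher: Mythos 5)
Your proposal matches the paper's proof in both parts: for (1) the paper simply asserts that exchangeability puts $h^{\sharp}_{K}(x)(b_{1},\ldots,b_{K})$ into the normal form with all ones up front, and your induction on adjacent transpositions (applied to the joint distribution $h^{\sharp K}(x)$ on $2^{K}\times X$, so that the final draw commutes past permutations of the earlier coordinates) is exactly the right way to fill in that one-liner; for (2) your main computation is the paper's, absorbing the weights via $\binom{K+1}{k+1}\tfrac{k+1}{K+1}=\binom{K}{k}=\binom{K+1}{k}\tfrac{K+1-k}{K+1}$, using exchangeability to move the extra $1$ to the last position, and marginalising the last draw. One caveat on your parenthetical alternative for (2): $\mathsl{acc}$ is \emph{not} a morphism from the drop-last-coordinate chain to the draw-and-delete chain (for instance $\drawdelete\big(\mathsl{acc}(1,0)\big)=\tfrac12\ket{1\ket{1}}+\tfrac12\ket{1\ket{0}}$ whereas $\mathsl{acc}(1)=1\ket{1}$), so the cone property cannot be ``transported along $\mathsl{acc}$'' unconditionally; the two chains only agree after precomposing with a permutation-invariant distribution, which is precisely what the direct computation exploits.
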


\begin{proof}
  \begin{enumerate}
\item Among $b_{1}, \ldots, b_{K} \in 2$ there are $\binom{K}{k}$-many
  possibilities of having $k$-many ones. By exchangeability of $h$
  there is a normal form with all these ones upfront
\[ \begin{array}{rcl}
h^{\sharp}_{K}(x)\big(b_{1},\ldots, b_{K}\big)
& = &
h^{\sharp}_{K}(x)\big(\,\underbrace{1,\ldots,1}_{k\text{ times}}, \,
   \underbrace{0,\ldots,0}_{K-k\text{ times}}\big)
\end{array} \]

\noindent 

\item Using~\eqref{eqn:drawdeleteconecondition} we get, for any $\varphi= (k\ket 1+(K-k)\ket 0) \in \Mlt[K](2)$,
\[ \hspace*{-0.5em}\begin{array}[b]{rcl}
\lefteqn{\big(\drawdelete \klafter h_{K+1}\big)(x)(\varphi)}
\\
& = &
\frac{k+1}{K+1} \cdot h_{K+1}(x)(\varphi+\ket 1) + 
   \frac{K+1-k}{K+1} \cdot h_{K+1}(x)(\varphi +\ket 0)
\\[+0.2em]
& \smash{\stackrel{\eqref{ExchCoalgEqn}}{=}} &
\frac{k+1}{K+1} \cdot \binom{K+1}{k+1}\cdot 
   h^{\sharp}_{K+1}(x)\big(\underbrace{1,\ldots,1}_{k+1\text{ times}}, 0,\ldots,0\big)
  \; +
\\
& & \qquad
\frac{K+1-k}{K+1} \cdot \binom{K+1}{k}\cdot 
   h^{\sharp}_{K+1}(x)\big(\underbrace{1,\ldots,1}_{k\text{ times}}, 0,\ldots,0\big)
\\
& = &
\binom{K}{k}\cdot\Big(
h^{\sharp}_{K+1}(x)\big(\underbrace{1,\ldots,1}_{k\text{ times}}, 0,\ldots,0,1\big)
+
h^{\sharp}_{K+1}(x)\big(\underbrace{1,\ldots,1}_{k\text{ times}}, 0,\ldots,0\big)\Big)
\\
& = &
\binom{K}{k}\cdot \sum_{y} 
   h^{\sharp K}(x)\big(\underbrace{1,\ldots,1}_{k\text{ times}}, 0,\ldots,0,y\big)
   \cdot \Big(h^{\sharp}_{1}(y)(1) + h^{\sharp}_{1}(y)(0)\Big)
\\
& = &
\binom{K}{k}\cdot \sum_{y} 
   h^{\sharp K}(x)\big(\underbrace{1,\ldots,1}_{k\text{ times}}, 0,\ldots,0,y\big)
   \cdot 1
\\
& = &
\binom{K}{k}\cdot
   h^{\sharp}_{K}(x)\big(\underbrace{1,\ldots,1}_{k\text{ times}}, 0,\ldots,0\big)
\\
& \smash{\stackrel{\eqref{ExchCoalgEqn}}{=}} &
h_{K}(x)(\varphi).
\end{array} \eqno{\QEDbox} \]
\end{enumerate}
\end{proof}





Since the P\'olya's urn coalgebra $\mathsl{pol}$ is exchangeable, the
lemma provides another way to deduce that the channels
$\mathsl{pol}_K\colon \neMlt(2)\chanto \Mlt[K](2)$ form a
cone~\eqref{eqn:cone}.

\section{Background on continuous probability}

De Finetti's theorem is about the passage from discrete probability to
continuous probability.  Consider the uniform distribution $\mu$ on
$[0,1]$. The probability of any particular point is $0$.  So rather
than give a probability to individual points, we give probabilities to
sets; for example, the probability of a point in the half interval
$[0,\frac 12]$ is $\frac 12$.

In general, a $\sigma$-algebra on a set $X$ is a collection of sets
$\Sigma$ that is closed under countable unions and complements. A
measurable space $(X,\Sigma)$ is a set together with a
$\sigma$-algebra; the sets in $\Sigma$ are called measurable. In this
paper we mainly consider two kinds of $\sigma$-algebra, and so they
will often be left implicit:
\begin{itemize}
\item on a countable set (\eg~$2$, $\Mlt(2)$, $\Mlt[K](2)$) we
  consider the $\sigma$-algebra where every set is measurable.

\item on the unit interval $[0,1]$, the Borel $\sigma$-algebra, which
  is the least $\sigma$-algebra containing the intervals.
\end{itemize}
On a measurable space $(X,\Sigma)$, a \emph{probability
  measure} is a function $\omega\colon\Sigma \rightarrow [0,1]$
with $\omega(X) = 1$ and with $\omega(\bigcup_{i}U_{i}) =
\sum_{i}\omega(U_{i})$ when the measurable subsets $U_{i}\in\Sigma$
are pairwise disjoint.  On a
finite set $X$ the probability measures coincide with the discrete distributions.

The morphisms of measurable spaces are the functions $f\colon X\to Y$
for which the inverse image of a measurable set is
measurable. Crucially, if $f\colon X\to [0,1]$ is measurable and
$\omega \colon \Sigma\to [0,1]$ is a probability measure, then we can
find the Lebesgue integral $\int f(x)\,\omega(\dd x)$, which is the
expected value of~$f$. When $X$ is finite, the Lebesgue integral is a
sum: $\int f(x)\,\omega(\dd x)=\sum_Xf(x)\omega(\{x\})$. When
$X=[0,1]$ and $\omega$ is the uniform distribution, then the Lebesgue
integral is the familiar one.

We write $\Giry(X,\Sigma)$ for the set of all probability measures on
$(X,\Sigma)$, where we often omit the $\sigma$-algebra $\Sigma$ when
it is clear from the context.: $\Dst(X) \cong \Giry(X)$, when all
subsets are seen as measurable.  The set $\Giry(X,\Sigma)$ itself has
a $\sigma$-algebra, which is the least $\sigma$-algebra making the
sets $\{\omega~|~\omega(U)<r\}$ measurable for all fixed $U$ and $r$.

Like $\Dst$, the construction $\Giry$ is a monad, commonly called the
Giry monad, but $\Giry$ is a monad on the category of measurable
spaces.  Maps in the associated Kleisli category $\Kl(\Giry)$ will
also be called \emph{channels}. Thus $f\colon X \chanto Y$ between
measurable spaces can be described as a function ${f\colon X \times
  \Sigma_Y\rightarrow [0,1]}$ that is a measure in $\Sigma_Y$ and
measurable in $X$.  For a measure~$\omega\in\Giry(X)$ and a channel
$g\colon Y \chanto Z$ we define a new distribution $f \dnib \omega \in
\Giry(Y)$ and a new channel $g \klafter f \colon X \chanto Z$ by
Kleisli composition:
\[ \begin{array}{rclcrcl}
(f \dnib \omega)(U)
& \coloneqq &
\displaystyle\textstyle
              \int_{X}  f(x)(U)\,\omega(\dd x)
& \qquad &
\big(g \klafter f\big)(x)
& \coloneqq &
g \dnib f(x).
\end{array} \]
 
Notice that a discrete channel is in particular a continuous channel, according to our terminology,
and so we have an inclusion of Kleisli categories categories
$\Kl(\Dst) \hookrightarrow \Kl(\Giry)$.

\paragraph{Illustrations from the beta-bernoulli distributions.}

At the end of Section~\ref{sec:iteration-coalg}, we briefly mentioned
the beta-bernoulli distribution as it arises from the coalgebra for
P\'olya's urn.  In general, this beta-binomial channel is defined for
arbitrary $\alpha,\beta\in\pR$ as:
\begin{equation}
\label{BetabinDefEqn}
\hspace*{-1em}\begin{array}{rcl}
\betabinomial[K](\alpha,\beta)
& \coloneqq &
\displaystyle\!\!\sum_{0\leq k\leq K} \textstyle \binom{K}{k} \cdot \displaystyle
    \frac{\Beta(\alpha + k, \beta + K-k)}{\Beta(\alpha, \beta)}
    \bigket{k\ket{0} \!+\! (K-k)\ket{1}},
\end{array}
\end{equation}

\noindent This description involves the function $\Beta \colon \pR
\times \pR \rightarrow \R$ defined as:
\begin{equation}
\label{BetaDefEqn}
\begin{array}{rcl}
\Beta(\alpha,\beta)
& \coloneqq &
\displaystyle\int_{0}^{1} x^{\alpha-1}\cdot (1-x)^{\beta-1}\intd x
\end{array}
\end{equation}

\noindent This $\Beta$ is a well-known mathematical function
satisfying for instance:
\begin{equation}
\label{BetaEqns}
\begin{array}{rclcrcl}
\Beta(\alpha+1,\beta)
& = &
\displaystyle\frac{\alpha}{\alpha+\beta}\cdot\Beta(\alpha,\beta)
& \qquad &
\Beta(n+1,m+1)
& = &
\displaystyle\frac{n!\cdot m!}{(n+m+1)!},
\end{array}
\end{equation}

\noindent for $n,m\in\NNO$. The latter equation shows
that~\eqref{PolyaIteratedDivEqn} is an instance of~\eqref{BetabinDefEqn}.

The function $\Beta$ is also used to define the Beta distribution on
the unit interval $[0,1]$. We describe it here as a channel in the
Kleisli category of the Giry monad:
\[ \xymatrix{
\pR\times\pR\ar[rr]^-{\betachan} & & \Giry([0,1]).
} \]

\noindent It is defined on $\alpha,\beta\in\pR$ and measurable
$M\subseteq [0,1]$ as:
\begin{equation}
\label{BetaChanDefEqn}
\begin{array}{rcl}
\betachan(\alpha,\beta)(M)
& \coloneqq &
\displaystyle
   \int_{M} \frac{x^{\alpha-1}\cdot (1-x)^{\beta-1}}{\Beta(\alpha,\beta)}\intd x.
\end{array}
\end{equation}

We prove some basic properties of the beta-binomial channel. These
properties are well-known, but are formulated here in slightly
non-standard form, using channels.

\begin{lemma}
\label{BetaBinomLem}
\begin{enumerate}
\item \label{BetaBinomLemComp} A betabinomial channel can itself be
decomposed as:
\[ \xymatrix@R-0.5pc{
& \Mlt[K](2)
\\
\pR \times \pR\ar[ur]^-{\betabinomial[K]\quad}|-{\circ}
   \ar[r]_-{\betachan}|-{\circ}
  & [0,1]\ar[u]_-{\binomial[K]}|-{\circ}
} \]

\item \label{BetaBinomLemCone} The beta-binomial channels
\[ \xymatrix@C+2pc{
\pR\times\pR\ar[r]^-{\betabinomial[K]}|-{\circ} & \Mlt[K](2)
} \]

\noindent for $K\in\NNO$, form a cone for the infinite chain of
draw-and-delete channels \eqref{eqn:chain}.
\end{enumerate}
\end{lemma}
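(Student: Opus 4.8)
To prove Lemma~\ref{BetaBinomLem}, I would prove part~(\ref{BetaBinomLemComp}) by a direct computation and then derive part~(\ref{BetaBinomLemCone}) as a formal consequence, combining part~(\ref{BetaBinomLemComp}) with Proposition~\ref{BinomChainProp}. For part~(\ref{BetaBinomLemComp}), the claim is that $\binomial[K] \klafter \betachan = \betabinomial[K]$ as channels $\pR \times \pR \chanto \Mlt[K](2)$. Unfolding Kleisli composition in $\Kl(\Giry)$, I would compute, for fixed $\alpha,\beta \in \pR$ and a multiset $k\ket{0} + (K-k)\ket{1} \in \Mlt[K](2)$,
\[
\big(\binomial[K] \klafter \betachan\big)(\alpha,\beta)\big(k\ket{0}+(K-k)\ket{1}\big)
\;=\; \int_0^1 \binom{K}{k} x^{k}(1-x)^{K-k} \cdot \frac{x^{\alpha-1}(1-x)^{\beta-1}}{\Beta(\alpha,\beta)}\intd x,
\]
where I have used the definition of $\betachan$ in~\eqref{BetaChanDefEqn} and the fact that the binomial channel is the binary multinomial channel, so its multiplicity at $k\ket{0}+(K-k)\ket{1}$ is $\binom{K}{k}x^k(1-x)^{K-k}$. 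Pulling $\binom{K}{k}/\Beta(\alpha,\beta)$ out of the integral and collecting exponents, the integrand becomes $x^{(\alpha+k)-1}(1-x)^{(\beta+K-k)-1}$, so by the definition~\eqref{BetaDefEqn} of $\Beta$ the integral equals $\Beta(\alpha+k,\beta+K-k)$. Hence the whole expression is $\binom{K}{k}\cdot \Beta(\alpha+k,\beta+K-k)/\Beta(\alpha,\beta)$, which is exactly the multiplicity in the definition~\eqref{BetabinDefEqn} of $\betabinomial[K]$. Since the two channels agree on every point of a countable space, they are equal.

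**Deducing the cone property.** For part~(\ref{BetaBinomLemCone}), I would argue purely diagrammatically. Proposition~\ref{BinomChainProp} (instantiated at $X=2$, using $[0,1]\cong\Dst(2)$ and $\{0,\dots,K\}\cong\Mlt[K](2)$) says that the binomial channels $\binomial[K]\colon [0,1]\chanto\Mlt[K](2)$ form a cone over the draw-and-delete chain~\eqref{eqn:chain}, i.e.\ $\drawdelete \klafter \binomial[K+1] = \binomial[K]$ for all $K$. Precomposing both sides with the single fixed channel $\betachan\colon \pR\times\pR\chanto[0,1]$ and using associativity of Kleisli composition gives
\[
\drawdelete \klafter \big(\binomial[K+1]\klafter\betachan\big) \;=\; \binomial[K]\klafter\betachan,
\]
which by part~(\ref{BetaBinomLemComp}) is precisely $\drawdelete\klafter\betabinomial[K+1] = \betabinomial[K]$. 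So the $\betabinomial[K]$ form a cone, being the composite of the binomial cone with a fixed channel into its apex.

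**Where the work is.** There is essentially no obstacle here: the only genuine computation is the integral in part~(\ref{BetaBinomLemComp}), and that is just a matter of merging two products of powers of $x$ and $1-x$ under the integral sign and recognising the resulting integrand as the integrand defining $\Beta(\alpha+k,\beta+K-k)$ — the Beta recurrences~\eqref{BetaEqns} are not even needed. The one point worth stating carefully is that $\binomial[K]$, a priori a discrete channel, is being viewed inside $\Kl(\Giry)$ via the inclusion $\Kl(\Dst)\hookrightarrow\Kl(\Giry)$, so that the composite $\binomial[K]\klafter\betachan$ is computed by Lebesgue integration against the Beta measure; once this is noted, everything goes through as above. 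If anything is mildly delicate it is bookkeeping the identification $\{0,\dots,K\}\cong\Mlt[K](2)$ consistently between the two parts, but this is the same identification already fixed in Section~\ref{BackgroundSec}.
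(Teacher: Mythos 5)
Your proposal matches the paper's own proof: part~(\ref{BetaBinomLemComp}) is established by exactly the same Kleisli-composition integral, recognising the integrand as the one defining $\Beta(\alpha+k,\beta+K-k)$, and part~(\ref{BetaBinomLemCone}) is deduced, as in the paper, by factorising the beta-binomial channels through the binomial cone of Proposition~\ref{BinomChainProp}. No gaps.
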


\begin{proof}
\begin{enumerate}
\item Composition in the Kleisli category $\Kl(\Giry)$ of the Giry
  monad $\Giry$ yields:
\[ \begin{array}{rcl}
\lefteqn{\big(\binomial[K] \klafter \betachan\big)
   (\alpha,\beta)\big(k\ket{0} \!+\! (K-k)\ket{1}\big)}
\\
& = &
\displaystyle\int_{0}^{1} {\textstyle\binom{K}{k}} \cdot 
   x^{k}\cdot (1-x)^{K-k} \cdot
   \frac{x^{\alpha-1} \cdot (1-x)^{\beta-1}}{\Beta(\alpha,\beta)} \intd x
\\[+0.8em]
& = &
\binom{K}{k} \cdot \displaystyle
   \frac{\int_{0}^{1} x^{\alpha+k-1} \cdot (1-x)^{\beta + (K-k)-1} \intd x}
   {\Beta(\alpha,\beta)} 
\\[+0.8em]
& \smash{\stackrel{\eqref{BetaDefEqn}}{=}} &
\binom{K}{k} \cdot \displaystyle
   \frac{\Beta(\alpha+k, \beta+K-k)}{\Beta(\alpha,\beta)}
\\[+0.6em]
& \smash{\stackrel{\eqref{BetabinDefEqn}}{=}} &
\betabinomial[K](\alpha,\beta)\big(k\ket{0} \!+\! (K-k)\ket{1}\big).
\end{array} \]

\item The easiest way to see this is by observing that the
  channels factorise via the binomial channels
  $\binomial[K] \colon [0,1] \chanto \Mlt[K](2)$. In
  Proposition~\ref{BinomChainProp} we have seen that these binomial
  channels commute with draw-and-delete channels $\drawdelete$. \QED
\end{enumerate}
\end{proof}

\section{De Finetti as a limit theorem}\label{FinettiSec}

We first describe our `limit' reformulation of the de Finetti theorem
and later on argue why this is a reformulation of the original result.

\begin{theorem}
\label{FinettiThm}
For $X = 2$ the cone~\eqref{BinomChainDiag} is a limit in the Kleisli
category $\Kl(\Giry)$ of the Giry monad; this limit is the unit
interval $[0,1] \cong \Dst(2)$.

This means that if we have channels $c_{K} \colon Y \chanto \Mlt[K](2)
\cong \{0,1,\ldots,K\}$ with $\drawdelete \klafter c_{K+1} = c_{K}$,
then there is a unique mediating channel $c \colon Y \chanto [0,1]$
with $\binomial[K] \klafter c = c_{K}$, for each $K\in\NNO$.
\end{theorem}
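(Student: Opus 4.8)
The plan is to construct the mediating channel $c \colon Y \chanto [0,1]$ directly from the cone data $(c_K)$ using a classical measure-theoretic extension argument, and then verify uniqueness. Fix a point $y \in Y$. The family $c_K(y) \in \Dst(\Mlt[K](2)) \cong \Dst(\{0,\dots,K\})$ should be thought of as specifying, for each $K$, the probabilities of the number of heads in a $K$-toss experiment. The draw-and-delete consistency $\drawdelete \klafter c_{K+1} = c_K$ is exactly the statement that these finite distributions are marginally consistent in the sense of de Finetti: discarding a uniformly random ball from a size-$(K{+}1)$ urn recovers the size-$K$ distribution. By de Finetti's classical representation theorem (this is the step where we invoke the content of the original result, which our Theorem reformulates), there is a \emph{unique} probability measure $c(y) \in \Giry([0,1])$ such that $\binomial[K] \klafter c(y)$, i.e.\ $\int_0^1 \binomial[K](r) \, c(y)(\dd r)$, equals $c_K(y)$ for every $K$. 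Concretely, the moments of $c(y)$ are pinned down by $\int_0^1 r^K \, c(y)(\dd r) = c_K(y)(\{K\ket 0\})$ (the probability of all-heads in $K$ tosses), and a measure on $[0,1]$ is determined by its moments (Hausdorff moment problem), while existence of a genuine measure with these moments is guaranteed precisely by the consistency conditions, which force the moment sequence to be completely monotone.

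The remaining work is to promote the pointwise assignment $y \mapsto c(y)$ to a morphism in $\Kl(\Giry)$, i.e.\ to check measurability of $c \colon Y \times \Sigma_{[0,1]} \to [0,1]$ in the $Y$-argument. For this I would use that the $\sigma$-algebra on $\Giry([0,1])$ is generated by the evaluation maps $\omega \mapsto \omega(M)$, and that intervals generate the Borel sets; so it suffices to show $y \mapsto c(y)([0,t])$ is measurable for each $t$. Since $c(y)$ is determined by the values $c_K(y)(k\ket 0 + (K-k)\ket 1)$ — each of which is a measurable function of $y$ because $c_K$ is a channel — and since $c(y)([0,t])$ can be recovered as a pointwise limit of expressions built from these values (e.g.\ via the Bernstein-polynomial / weak-convergence reconstruction $c(y)([0,t]) = \lim$ of partial sums of $c_K(y)$), measurability of $c$ follows from measurability of $c_K$ and closure of measurable functions under pointwise limits. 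Finally, $\binomial[K] \klafter c = c_K$ holds by construction, and uniqueness of the mediating channel is inherited from the pointwise uniqueness in de Finetti's theorem: any other $c'$ with $\binomial[K] \klafter c' = c_K$ must satisfy $c'(y) = c(y)$ for every $y$, hence $c' = c$ as channels.

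The main obstacle is the heart of de Finetti's theorem itself — the passage from the consistent discrete data $(c_K)$ to an actual probability measure on $[0,1]$ — together with the bookkeeping needed to recognize $\drawdelete$-consistency as exactly the hypothesis of the classical theorem. I expect the cleanest route is to phrase this via the moment problem: show that $\drawdelete \klafter c_{K+1} = c_K$ forces the sequence $m_K \coloneqq c_K(y)(K\ket 0)$ to be completely monotone with $m_0 = 1$, invoke Hausdorff's theorem to get the measure, and then identify $\binomial[K] \klafter c(y) = c_K(y)$ on all of $\Mlt[K](2)$ (not just the all-heads atom) by a finite-difference computation. The measurability step, by contrast, is routine once the pointwise construction is in hand. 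An alternative to the moment problem is to appeal directly to the Kolmogorov extension theorem for the exchangeable sequence obtained from the cone, but the moment-problem formulation keeps everything on $[0,1]$ and avoids introducing the infinite product space.
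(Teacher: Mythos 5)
Your proposal is correct and follows essentially the same route as the paper's own proof: pointwise, the cone condition forces the all-heads probabilities $c_K(y)(K\ket{0})$ to be a completely monotone sequence, Hausdorff's moment theorem supplies the unique measure, and a finite-difference computation (the paper's Claims 1 and 2) extends the identity $\binomial[K] \klafter c(y) = c_K(y)$ from the all-heads atom to all of $\Mlt[K](2)$. The only minor divergence is the measurability step, where the paper embeds $\Giry([0,1])$ into $[0,1]^{\infty}$ via the (measurable) moment map rather than reconstructing $y \mapsto c(y)([0,t])$ as a pointwise limit of partial sums, but both arguments are routine.
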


In the previous section we have seen an example with $Y = \pR \times
\pR$ and $c_{K} = \betabinomial[K] \colon Y \chanto \Mlt[K](2)$ and
mediating channel $c = \betachan \colon Y \chanto [0,1]$.

Below we give the proof, in the current setting. We first illustrate
the central role of the draw-and-delete maps $\drawdelete \colon
\Mlt[K+1](X) \chanto \Mlt[K](X)$ and how they give rise to
hypergeometric and binomial distributions.

Via the basic isomorphism $\Mlt[K](2) \cong \{0,1,\ldots,K\}$ the
draw-and-delete map~\eqref{eqn:drawdeleteconecondition} becomes a map
$\drawdelete \colon \{0,\ldots,K+1\} \chanto \{0,\ldots,K\}$, given
by:
\begin{equation}
\label{DrawdeleteBinEqn}
\begin{array}{rcl}
\drawdelete(\ell)
& = &
\frac{\ell}{K+1}\ket{\ell-1} + \frac{K+1-\ell}{K+1}\ket{\ell}.
\end{array}
\end{equation}

\noindent Notice that the border cases $\ell = 0$ and $\ell = K+1$ are
handled implicitly, since the corresponding probabilities are then
zero.

We can iterate $\drawdelete$, say $N$ times, giving $\drawdelete^{N}
\colon \{0,\ldots,K+N\} \chanto \{0,\ldots,K\}$.  If we
draw-and-delete $N$ times from an urn with $K+N$ balls, the result is
the same as sampling without replacement $K$ times from the same urn.
Sampling without replacement is modelled by the \emph{hypergeometric
  distribution}.  So we can calculate $\drawdelete^N$ explicitly,
using the hypergeometric distribution formula:
\[ \begin{array}{rcl}
\drawdelete^{N}(\ell)
& = &
\displaystyle\sum_{0\leq k\leq K} 
   \frac{\binom{\ell}{k} \cdot \binom{K+N-\ell}{K-k}}{\binom{K+N}{K}}\bigket{k}.
\end{array} \]

\noindent Implicitly the formal sum is over $k\leq\ell$, since
otherwise the corresponding probabilities are zero.

As is well known, for large $N$, the hypergeometric distribution can
be approximated by the binomial distribution.  To be precise, for
fixed $K$, if $(\nicefrac{\ell}{K+N})\to p$ as $N\to\infty$, then the
hypergeometric distribution $\drawdelete^{N}(\ell)$ approaches the
binomial distribution $\binomial[K](p)$ as $N$ goes to infinity.  This
works as follows. Using the notation $(n)_{m} = \prod_{i<m} (n-i) =
n(n-1)(n-2)\cdots(n-m+1)$ we can write $\binom{n}{m} =
\nicefrac{(n)_{m}}{n!}$ and thus:
\[ \begin{array}{rcl}
\displaystyle\frac{\binom{\ell}{k} \cdot \binom{K+N-\ell}{K-k}}{\binom{K+N}{K}}
& = &
\displaystyle\frac{(\ell)_{k}}{k!} \cdot \frac{(K+N-\ell)_{K-k}}{(K-k)!}
   \cdot \frac{K!}{(K+N)_{K}}
\\
& = &
\binom{K}{k}\cdot \displaystyle
   \frac{(\ell)_{k} \cdot (K+N-\ell)_{K-k}}{(K+N)_{K}}
\\[+0.8em]
& = &
\binom{K}{k}\cdot \displaystyle
   \frac{\prod_{i<k} (\ell-i) \cdot \prod_{i<K-k} (K+N-\ell-i)}
   {\prod_{i<K}(K+N-i)}
\\[+0.8em]
& = &
\binom{K}{k}\cdot \displaystyle
   \frac{\prod_{i<k} (\nicefrac{\ell}{K+N}-\nicefrac{i}{K+N}) \cdot 
   \prod_{i<K-k} (1-\nicefrac{\ell}{K+N}-\nicefrac{i}{K+N})}
   {\prod_{i<K}(1-\nicefrac{i}{K+N})}
\\[+0.8em]
& \rightarrow &
\binom{K}{k}\cdot p^{k} \cdot (1-p)^{K-k}
   \qquad \mbox{as $N \rightarrow \infty$}
\\
& = &
\binomial[K](p)(k).
\end{array} \]

The proof of Theorem~\ref{FinettiThm} makes use of a classical result
of Hausdorff~\cite{Hausdorff21}, giving a criterion for the existence
of probability measures. It is repeated here without proof (but see
\eg~\cite{Feller70} for details).

Recall that the \emph{moments} of a distribution $\mu\in\Giry([0,1])$
form a sequence of numbers $\frak{m}_1, \dots,
\frak{m}_K,\ldots\in[0,1]$ given by $\frak{m}_K=\int_0^1 r^K\,\mu(\dd
r)$: the expected values of $r^K$.  In general, moments correspond to
statistical concepts such as mean, variance, skew and so on.  But in
this specific case, the $K$-th moment of $\mu$ is the probability
$(\binomial[K]\dnib\mu)(K)$ that $K$ balls drawn will \emph{all} be
black.

Recall that a sequence $a = (a_{n})$ is \emph{completely monotone} if:
\[ \begin{array}{rcl}
(-1)^{k}\cdot \big(\Delta^{k} a\big)_{n}
& \geq &
0
\qquad \mbox{for all }n,k.
\end{array} \]


\noindent This formulation involves the difference operator $\Delta$
on sequences, given by $(\Delta a)_{n} = a_{n+1} - a_{n}$. This
operator is iterated, in the standard way as $\Delta^{0} = \idmap$ and
$\Delta^{m+1} = \Delta \after \Delta^{m}$.

\begin{theorem}[Hausdorff]
\label{HausdorffThm}
A sequence $a = (a_{n})_{n\in\NNO}$ of non-negative real numbers $a_n$
is \emph{completely monotone} if and only if it arises as sequence of
moments: there is a unique measure $\mu\in\Giry([0,1])$ with, for each
$n$,
\[ \begin{array}{rcl}
a_{n}
& = &
\displaystyle\int_{0}^{1} x^{n} \,\mu(\dd x).
\end{array} \eqno{\QEDbox} \]
\end{theorem}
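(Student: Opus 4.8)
The plan is to prove the substantial ``completely monotone $\Rightarrow$ moment sequence'' direction by constructing an explicit family of discrete measures on a grid inside $[0,1]$ and extracting a weak limit, leaving the converse and uniqueness as the routine parts. For the converse, if $a_n = \int_0^1 x^n\,\mu(\dd x)$ then the difference operator acts by $(\Delta a)_n = \int_0^1 x^n(x-1)\,\mu(\dd x)$, so iterating gives $(-1)^k(\Delta^k a)_n = \int_0^1 x^n(1-x)^k\,\mu(\dd x) \ge 0$, the integrand being non-negative on $[0,1]$; thus every moment sequence is completely monotone, and necessarily $a_0 = \mu([0,1]) = 1$. Conversely I may assume $a_0 = 1$ throughout, since complete monotonicity is preserved under positive scaling and the general case reduces to this normalisation, the representing measure then having total mass $a_0$.

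For the main direction, fix a completely monotone sequence with $a_0=1$ and, for each $N$, define a discrete measure $\mu_N$ supported on the grid $\{0, \tfrac1N, \ldots, \tfrac NN\}\subseteq[0,1]$ by placing mass
\[
  c_{N,k} \;\coloneqq\; \binom{N}{k}\,(-1)^{N-k}\big(\Delta^{N-k}a\big)_k
  \qquad\text{at the point }\tfrac kN.
\]
Complete monotonicity makes each $c_{N,k}\ge 0$, so $\mu_N$ is a genuine positive measure. The crux is the purely combinatorial binomial-moment identity
\[
  \sum_{k=0}^{N}\binom{k}{m}\,c_{N,k} \;=\; \binom{N}{m}\,a_m
  \qquad\text{for all }0\le m\le N,
\]
which holds for \emph{every} sequence $a$. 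I would prove it by expanding $(-1)^{N-k}(\Delta^{N-k}a)_k = \sum_j \binom{N-k}{j}(-1)^j a_{k+j}$, exchanging the order of summation, and applying the two elementary identities $\binom{N}{k}\binom{N-k}{i-k} = \binom{N}{i}\binom{i}{k}$ and $\sum_k \binom{i}{k}\binom{k}{m}(-1)^{i-k} = [\,i=m\,]$. Taking $m=0$ yields $\sum_k c_{N,k} = a_0 = 1$, so each $\mu_N\in\Giry([0,1])$; taking general $m$ shows the rescaled falling-factorial moments $\sum_k \frac{(k)_m}{(N)_m}\,c_{N,k}$ equal $a_m$ exactly.

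It then remains to pass from falling-factorial moments to ordinary moments and to the limit. Since $\frac{(k)_m}{(N)_m} = \prod_{i<m}\frac{k-i}{N-i}$ differs from $(k/N)^m$ by $O(m^2/N)$ uniformly in $k$, and $\sum_k c_{N,k}=1$, I would conclude that $\int_0^1 x^m\,\mu_N(\dd x) = \sum_k (k/N)^m c_{N,k} \to a_m$ for each fixed $m$; this falling-factorial-to-power estimate is exactly the elementary limit already carried out for the hypergeometric-to-binomial convergence displayed just before the theorem. Because $[0,1]$ is compact, the probability measures $\mu_N$ admit a weakly convergent subsequence $\mu_{N_j}\rightharpoonup\mu$ (weak-$*$ compactness, equivalently Helly's selection theorem); weak convergence preserves integrals of the continuous functions $x\mapsto x^m$, so $\int_0^1 x^m\,\mu(\dd x) = \lim_j \int_0^1 x^m\,\mu_{N_j}(\dd x) = a_m$, giving the representing measure. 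Uniqueness is immediate from Stone--Weierstrass: two measures on the compact space $[0,1]$ sharing all moments agree on all polynomials, hence on all of $C([0,1])$ by density, hence coincide. I expect the combinatorial identity to be the only genuinely fiddly step; non-negativity, total mass, moment convergence, compactness, and uniqueness all follow routinely once it is in hand.
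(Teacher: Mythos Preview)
The paper does not prove this theorem: it is explicitly quoted as a classical result of Hausdorff, ``repeated here without proof (but see e.g.\ Feller for details)'', with the $\QEDbox$ placed at the end of the statement to signal this. Your argument is a correct and standard proof of the Hausdorff moment problem --- Bernstein-type approximating measures $\mu_N$ built from the finite differences, the binomial-moment identity giving exact falling-factorial moments, passage to ordinary moments, weak-$*$ compactness on $[0,1]$, and uniqueness via Stone--Weierstrass.

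It is worth noting, though, that the ingredients of your proof reappear almost verbatim in the paper's proof of the \emph{next} theorem (the de Finetti limit, Theorem~\ref{FinettiThm}), where Hausdorff's result is applied. Claim~1 there is precisely your converse direction $(-1)^k(\Delta^k\frak{m})_n = \int x^n(1-x)^k\,\mu(\dd x)$; Claim~2 shows $\omega_{k+n}(n) = \binom{k+n}{n}(-1)^k(\Delta^k\frak{b})_n$, which after reindexing is exactly your weight $c_{N,k}$; and the aside about the approximating discrete distributions $\overline{\omega}_K = \sum_k \omega_K(k)\,\ket{k/K}$ matches your $\mu_N$. So the paper blackboxes Hausdorff's theorem but then essentially re-derives its combinatorial core when invoking it; your proposal makes that structure explicit.
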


\begin{proof}[of Theorem~\ref{FinettiThm}]
As a first step, we shall reason pointwise. Let, for each $K\in\NNO$,
a distribution $\omega_{K}\in \Dst(\{0,1,\ldots,K\})$ be given, with
$\drawdelete \dnib \omega_{K+1} = \omega_{K}$. We need to produce a
unique (continuous) distribution $\omega\in\Giry([0,1])$ such that
$\omega_{K} = \binomial[K] \dnib \omega$, for each $K$.

The problem of finding $\omega$ is essentially Hausdorff's moment
problem, see Theorem~\ref{HausdorffThm}. The connection between
Hausdorff's moments problem and de Finetti's theorem is well
known~\cite[\S VII.4]{Feller70}, but the connection is particularly
apparent in our channel-based formulation.


We exploit Hausdorff's criterion via two
claims, which we elaborate in some detail. The first one is standard.

\smallskip

\noindent\textbf{Claim 1}. If $\mu\in\Giry([0,1])$ has moments
$\frak{m}_{K}$, then:
\[ \begin{array}{rcl}
\displaystyle\int_{0}^{1} x^{n}\cdot (1-x)^{k} \,\mu(\dd x)
& = &
(-1)^{k}\cdot \big(\Delta^{k} \frak{m}\big)_{n}.
\end{array} \]

\noindent This equation can be proven by induction on $k$. It holds by
definition of $\frak{m}$ for $k=0$. Next:
\[ \begin{array}{rcl}
\lefteqn{\displaystyle\int_{0}^{1} x^{n}\cdot (1-x)^{k+1} \,\mu(\dd x)}
\\
& = &
\displaystyle\int_{0}^{1} x^{n}\cdot (1-x)\cdot (1-x)^{k} \,\mu(\dd x)
\\[+0.8em]
& = &
\displaystyle\int_{0}^{1} x^{n}\cdot (1-x)^{k} \,\mu(\dd x) \;-\;
   \displaystyle\int_{0}^{1} x^{n+1}\cdot (1-x)^{k} \,\mu(\dd x)
\\[+0.8em]
& \smash{\stackrel{\text{(IH)}}{=}} &
(-1)^{k}\cdot \big(\Delta^{k} \frak{m}\big)_{n} \;-\;
   (-1)^{k}\cdot \big(\Delta^{k} \frak{m}\big)_{n+1}
\\[+0.3em]
& = &
(-1)^{k+1}\cdot \Big(\big(\Delta^{k} \frak{m}\big)_{n+1} - 
   \big(\Delta^{k} \frak{m}\big)_{n}\Big)
\\
& = &
(-1)^{k+1}\cdot \big(\Delta^{k+1} \frak{m}\big)_{n}.   
\end{array} \]




The next claim involves a crucial observation about our setting: the
whole sequence of distributions $\omega_{K} \in
\Dst(\{0,1,\ldots,K\})$ is entirely determined by the probabilities
$\omega_{K}(K)$ of drawing $K$ balls which are all black.

\smallskip

\noindent\textbf{Claim 2}. The numbers $\frak{b}_{K} \coloneqq
\omega_{K}(K)\in [0,1]$, for $K\in\NNO$, determine all the
distributions $\omega_K$ via the relationship:
\[ \begin{array}{rcl}
\omega_{k+n}(n)
& = &
\binom{k+n}{n}\cdot (-1)^{k} \cdot \big(\Delta^{k}\frak{b}\big)_{n}.
\end{array} \]

\noindent In particular, the sequence $\frak{b}_{K}$ is completely
monotone.

The proof works again by induction on $k$. The case $k=0$ holds by
definition. For the induction step we use the cone property
$\drawdelete \dnib \omega_{k+1} =
\omega_{k}$. Via~\eqref{DrawdeleteBinEqn} it gives:
\[ \begin{array}{rcl}
\omega_{k}(i)
& = &
\frac{i+1}{k+1}\cdot\omega_{k+1}(i+1) + \frac{k+1-i}{k+1}\cdot\omega_{k+1}(i),
\end{array} \]

\noindent so that:
\[ \begin{array}{rcl}
\omega_{k+1}(i)
& = &
\frac{k+1}{k+1-i}\cdot\omega_{k}(i) - \frac{i+1}{k+1-i}\cdot\omega_{k+1}(i+1).
\end{array} \]

\noindent We use this equation in the first step below.
\[ \begin{array}{rcl}
\lefteqn{\omega_{(k+1)+n}(n)}
\\
& = &
\frac{k+1+n}{k+1}\cdot\omega_{k+n}(n) - \frac{n+1}{k+1}\cdot\omega_{k+1+n}(n+1)
\\
& \smash{\stackrel{\text{(IH)}}{=}} &
\frac{k+1+n}{k+1}\cdot\binom{k+n}{n}\cdot (-1)^{k} \cdot 
   \big(\Delta^{k}\frak{b}\big)_{n} -
   \frac{n+1}{k+1}\cdot\binom{k+1+n}{n+1}\cdot (-1)^{k} \cdot 

   \big(\Delta^{k}\frak{b}\big)_{n+1}
\\
& = &
\binom{k+1+n}{n}\cdot (-1)^{k+1} \cdot \Big(\big(\Delta^{k}\frak{b}\big)_{n+1}
   - \big(\Delta^{k}\frak{b}\big)_{n}\Big)
\\
& = &
\binom{k+1+n}{n}\cdot (-1)^{k+1} \cdot \big(\Delta^{k+1}\frak{b}\big)_{n}.
\end{array} \]

This second claim implies that there is a unique distribution
$\omega\in\Giry([0,1])$ whose $K$-th moments equals $\frak{b}_{K} =
\omega_{K}(K)$.  But then:
\[ \begin{array}{rcll}
\omega_{K}(k)
\hspace*{\arraycolsep}=\hspace*{\arraycolsep}
\omega_{(K-k)+k}(k)
& = &
\binom{K}{k}\cdot (-1)^{K-k} \cdot \big(\Delta^{K-k}\frak{b}\big)_{k}
  & \mbox{by Claim 2}
\\
& = &
\binom{K}{k}\cdot \displaystyle \int_{0}^{1} x^{k}\cdot (1-x)^{K-k} 
   \intd \omega(x) \quad
  & \mbox{by Claim 1}
\\
& = &
\big(\binomial[K]\dnib \omega\big)(k).
\end{array} \]


As an aside, it is perhaps instructive to look at how the distribution
$\omega\in\Giry([0,1])$ is built in the proof of Hausdorff's
criterion.  One way is as a limiting measure of the sequence of
finite discrete distributions on $[0,1]$,
\begin{equation}
\label{ApproxDstEqn}
\begin{array}{rcl}
\overline{\omega}_{K}
& \coloneqq &
\displaystyle\sum_{k=0}^{K}\omega_K(k)\bigket{\tfrac k K}.
\end{array}
\end{equation}

\noindent This is illustrated in the following table, for the examples of the binomial cone and the uniform cone.
The graphs show $\bar\omega_K$
(plotted as densities with respect to the counting measure). 
\newcommand{\discplot}[2]{%
  \begin{tikzpicture}[xscale=1.6, yscale=1]
        \begin{scope}[ycomb, yscale=1/#2]
            \draw[black!80, line width=0.1mm] plot #1;
          \end{scope}
          \draw[ultra thin] (0,0) -- (1,0);
          \draw[ultra thin](0,0) -- (0,1 );
     \foreach \x in {0,0.5,1}
        \draw[thin] (\x,-0pt) -- (\x,-1pt)
            node[anchor=north] {\tiny $\x$};
        \draw[thin] (-0pt,0) -- (-1pt,0)
            node[anchor=east] {\tiny $0$};
        \draw[thin] (-0pt,1) -- (-1pt,1)
            node[anchor=east] {\tiny $#2$};
    \end{tikzpicture}%
  }

\def\binomFiveDATA{(0,0.03125)(0.2,0.15625)(0.4,0.3125)(0.6,0.3125)(0.8,0.15625)(1,0.03125)}  \def\unifFiveDATA{(0,0.166666666666667)(0.2,0.166666666666667)(0.4,0.166666666666667)(0.6,0.166666666666667)(0.8,0.166666666666667)(1,0.166666666666667)} \def\binomTenDATA{(0,0.0009765625)(0.1,0.009765625)(0.2, 0.0439453125)(0.3,0.1171875)(0.4,0.205078125)(0.5,0.24609375)(0.6,0.205078125)(0.7,0.1171875)(0.8, 0.0439453125)(0.9,0.009765625)(1,0.0009765625)}
\def\unifTenDATA{(0,0.0909090909090909)(0.1,0.0909090909090909)(0.2,0.0909090909090909)(0.3,0.0909090909090909)(0.4,0.0909090909090909)(0.5,0.0909090909090909)(0.6,0.0909090909090909)(0.7,0.0909090909090909)(0.8,0.0909090909090909)(0.9,0.0909090909090909)(1,0.0909090909090909)}
\def\binomFiftyDATA{(0.26,0.000107824550266145)(0.28,0.000315179454624115)(0.3,0.00199913825504439)(0.32,0.00437311493290959)(0.34,0.00874622986581919)(0.36,0.0160347547540018)(0.38,0.027005902743582)(0.4,0.0418591492525522)(0.42,0.0597987846465031)(0.44,0.0788256706703905)(0.46,0.0959616860335188)(0.48,0.107956896787709)(0.5,0.112275172659217)(0.52,0.107956896787709)(0.54,0.0959616860335188)(0.56,0.0788256706703905)(0.58,0.0597987846465031)(0.6,0.0418591492525522)(0.62,0.027005902743582)(0.64,0.0160347547540018)(0.66,0.00874622986581919)(0.68,0.00437311493290959)(0.7,0.00199913825504439)(0.72,0.00083297427293516)(0.74,0.000315179454624115)(0.76,0.000107824550266145)}
\def\binomHundredDATA{(0.32,0.000112816971272231)(0.33,0.000232471334742779)(0.34,0.00045810527728724)(0.35,0.000863855665741653)(0.36,0.00155973939647799)(0.37,0.00269792760471867)(0.38,0.00447287997624411)(0.39,0.00711073226992655)(0.4,0.010843866711638)(0.41,0.0158690732365434)(0.42,0.0222922695465729)(0.43,0.0300686426442145)(0.44,0.0389525597890962)(0.45,0.0484742966264308)(0.46,0.0579583981402977)(0.47,0.0665904999909802)(0.48,0.0735270104067074)(0.49,0.0780286641050773)(0.5,0.0795892373871788)(0.51,0.0780286641050773)(0.52,0.0735270104067074)(0.53,0.0665904999909802)(0.54,0.0579583981402976)(0.55,0.0484742966264308)(0.56,0.0389525597890962)(0.57,0.0300686426442145)(0.58,0.0222922695465729)(0.59,0.0158690732365434)(0.6,0.010843866711638)(0.61,0.00711073226992655)(0.62,0.00447287997624411)(0.63,0.00269792760471867)(0.64,0.00155973939647798)(0.65,0.000863855665741653)(0.66,0.00045810527728724)(0.67,0.000232471334742779)(0.68,0.000112816971272231)}
\def\unifFiftyDATA{(0,0.0196078431372549)(0.02,0.0196078431372549)(0.04,0.0196078431372549)(0.06,0.0196078431372549)(0.08,0.0196078431372549)(0.1,0.0196078431372549)(0.12,0.0196078431372549)(0.14,0.0196078431372549)(0.16,0.0196078431372549)(0.18,0.0196078431372549)(0.2,0.0196078431372549)(0.22,0.0196078431372549)(0.24,0.0196078431372549)(0.26,0.0196078431372549)(0.28,0.0196078431372549)(0.3,0.0196078431372549)(0.32,0.0196078431372549)(0.34,0.0196078431372549)(0.36,0.0196078431372549)(0.38,0.0196078431372549)(0.4,0.0196078431372549)(0.42,0.0196078431372549)(0.44,0.0196078431372549)(0.46,0.0196078431372549)(0.48,0.0196078431372549)(0.5,0.0196078431372549)(0.52,0.0196078431372549)(0.54,0.0196078431372549)(0.56,0.0196078431372549)(0.58,0.0196078431372549)(0.6,0.0196078431372549)(0.62,0.0196078431372549)(0.64,0.0196078431372549)(0.66,0.0196078431372549)(0.68,0.0196078431372549)(0.7,0.0196078431372549)(0.72,0.0196078431372549)(0.74,0.0196078431372549)(0.76,0.0196078431372549)(0.78,0.0196078431372549)(0.8,0.0196078431372549)(0.82,0.0196078431372549)(0.84,0.0196078431372549)(0.86,0.0196078431372549)(0.88,0.0196078431372549)(0.9,0.0196078431372549)(0.92,0.0196078431372549)(0.940000000000001,0.0196078431372549)(0.960000000000001,0.0196078431372549)(0.980000000000001,0.0196078431372549)(1,0.0196078431372549)}
\def\unifHundredDATA{((0,0.0099009900990099)(0.01,0.0099009900990099)(0.02,0.0099009900990099)(0.03,0.0099009900990099)(0.04,0.0099009900990099)(0.05,0.0099009900990099)(0.06,0.0099009900990099)(0.07,0.0099009900990099)(0.08,0.0099009900990099)(0.09,0.0099009900990099)(0.1,0.0099009900990099)(0.11,0.0099009900990099)(0.12,0.0099009900990099)(0.13,0.0099009900990099)(0.14,0.0099009900990099)(0.15,0.0099009900990099)(0.16,0.0099009900990099)(0.17,0.0099009900990099)(0.18,0.0099009900990099)(0.19,0.0099009900990099)(0.2,0.0099009900990099)(0.21,0.0099009900990099)(0.22,0.0099009900990099)(0.23,0.0099009900990099)(0.24,0.0099009900990099)(0.25,0.0099009900990099)(0.26,0.0099009900990099)(0.27,0.0099009900990099)(0.28,0.0099009900990099)(0.29,0.0099009900990099)(0.3,0.0099009900990099)(0.31,0.0099009900990099)(0.32,0.0099009900990099)(0.33,0.0099009900990099)(0.34,0.0099009900990099)(0.35,0.0099009900990099)(0.36,0.0099009900990099)(0.37,0.0099009900990099)(0.38,0.0099009900990099)(0.39,0.0099009900990099)(0.4,0.0099009900990099)(0.41,0.0099009900990099)(0.42,0.0099009900990099)(0.43,0.0099009900990099)(0.44,0.0099009900990099)(0.45,0.0099009900990099)(0.46,0.0099009900990099)(0.47,0.0099009900990099)(0.48,0.0099009900990099)(0.49,0.0099009900990099)(0.5,0.0099009900990099)(0.51,0.0099009900990099)(0.52,0.0099009900990099)(0.53,0.0099009900990099)(0.54,0.0099009900990099)(0.55,0.0099009900990099)(0.56,0.0099009900990099)(0.57,0.0099009900990099)(0.58,0.0099009900990099)(0.59,0.0099009900990099)(0.6,0.0099009900990099)(0.61,0.0099009900990099)(0.62,0.0099009900990099)(0.63,0.0099009900990099)(0.64,0.0099009900990099)(0.65,0.0099009900990099)(0.66,0.0099009900990099)(0.67,0.0099009900990099)(0.68,0.0099009900990099)(0.69,0.0099009900990099)(0.7,0.0099009900990099)(0.71,0.0099009900990099)(0.72,0.0099009900990099)(0.73,0.0099009900990099)(0.74,0.0099009900990099)(0.75,0.0099009900990099)(0.76,0.0099009900990099)(0.77,0.0099009900990099)(0.78,0.0099009900990099)(0.79,0.0099009900990099)(0.8,0.0099009900990099)(0.81,0.0099009900990099)(0.820000000000001,0.0099009900990099)(0.830000000000001,0.0099009900990099)(0.840000000000001,0.0099009900990099)(0.850000000000001,0.0099009900990099)(0.860000000000001,0.0099009900990099)(0.870000000000001,0.0099009900990099)(0.880000000000001,0.0099009900990099)(0.890000000000001,0.0099009900990099)(0.900000000000001,0.0099009900990099)(0.910000000000001,0.0099009900990099)(0.920000000000001,0.0099009900990099)(0.930000000000001,0.0099009900990099)(0.940000000000001,0.0099009900990099)(0.950000000000001,0.0099009900990099)(0.960000000000001,0.0099009900990099)(0.970000000000001,0.0099009900990099)(0.980000000000001,0.0099009900990099)(0.990000000000001,0.0099009900990099)(1,0.0099009900990099)}
  \[{\small\begin{array}{p{2cm}|c|c|c|c}
    &K=5&K=10&K=50& \text{limit}(K\to \infty)
    \\\hline
    \vspace{-1cm}${\omega_K=}$ $\binomial[K](0.5)$&\discplot{coordinates \binomFiveDATA}{0.4}&
\discplot{coordinates \binomTenDATA}{0.3}
                 &\discplot{coordinates \binomFiftyDATA}{0.15}&\discplot{coordinates {(0.5,1)}}{1}\\\hline
    \vspace{-1cm}${\omega_K=}$ $ \upsilon_K$&\discplot{coordinates \unifFiveDATA}{0.2}&\discplot{coordinates \unifTenDATA}{0.1}&\discplot{coordinates \unifFiftyDATA}{0.02}&\parbox{2cm}{\vspace{-1.2cm}Continuous uniform (Lebesgue).}
                          \end{array}}\]


We conclude by explaining why this construction gives a measurable
mediating map.  Suppose we have a measurable space $(Y,\Sigma)$ and a
sequence of channels $c_K\colon Y\chanto \{0,\ldots,K\}$ forming a
cone over~\eqref{BinomChainDiag}, \ie~$c_K=\drawdelete
\klafter c_{K+1}$ for all $K\in \NNO$.  We define a mediating channel
$c\colon Y\chanto [0,1]$ by putting the distribution~$c(y)$ as the
unique one with moments $c_K(y)(K)$, as above.  It remains to show
that $c$, regarded as a function $Y\to \Giry([0,1])$, is a measurable
function.  It is immediate that the sequence-of-moments function
$\frak{m} \colon Y\to [0,1]^\infty$ given by:
\[ \begin{array}{rcccl}
\frak{m}(y)_K
& \coloneqq &
\displaystyle \int_0^1 r^K\,c(x)(\dd r)
& = &
c_K(y)(K).
\end{array} \]

\noindent is measurable, because the $c_K$'s are
assumed to be channels: let $V \subseteq [0,1]$ be measurable; for
associated basic measurable subsets of $[0,1]^{\infty}$ one has:
\[ \begin{array}{rcccl}
\frak{m}^{-1}\big([0,1]^{K} \times V \times [0,1]^{\infty}\big)
& = &
\setin{y}{Y}{c_{K}(y)(K) \in V}
& = &
c_{K}^{-1}\big([0,1]^{K} \times V\big).
\end{array} \]

\noindent Further, $c\colon Y\to \Giry ([0,1])$ factors through
$\frak{m}$; indeed $\Giry([0,1])$ can be thought of as a subset of
$[0,1]^\infty$, when each distribution is regarded as its sequence of
moments.  We conclude by the less well-known fact that $\Giry([0,1])$
is a sub\emph{space} of $[0,1]^\infty$.
\qed

\auxproof{
The following is Bart's original argument.
The probability density of
this $\omega$ is obtained as limit of step-functions. The four main
parts of the construction are illustrated in
Figure~\ref{FinettiChartsFig}.

We turn each $\omega_K$ into a step-function $\overline{\omega}_K$ on
the unit interval $[0,1]$. For a fixed $K$ we define a `splitting' function
$s_{K} \colon [0,1] \rightarrow \{0,1,\ldots,K\}$ by:
\[ \begin{array}{rclcl}
s_{K}(x)
& \coloneqq &
k
& \quad\Longleftrightarrow\quad &
\frac{k}{K+1} \leq x < \frac{k+1}{K+1}.
\end{array} \]

\noindent We need to add as border case $s_{K}(1) = K$. Via this
function $s_{K}$ we split up $[0,1]$ into $K+1$ intervals
$[0,\frac{1}{K+1})$, $[\frac{1}{K+1},\frac{2}{K+1})$, \ldots
    $[\frac{K}{K+1}, 1)$, via inverse images $s_{K}^{-1}(k) =
      [\frac{k}{K+1}, \frac{k+1}{K+1})$. We now define a function
        $\overline{\omega_K} \colon [0,1] \rightarrow \nnR$ via:
\[ \begin{array}{rcl}
\overline{\omega}_{K}(x)
& \coloneqq &
(K+1)\cdot \omega_{K}\big(s_{K}(x)\big).
\end{array} \]

\noindent This $\overline{\omega}_K$ forms a probability density
function (pdf) on $[0,1]$, since:
\[ \begin{array}{rcccl}
\displaystyle\int_{0}^{1} \overline{\omega}_K(x) \intd x
& = &
\displaystyle\sum_{0\leq k \leq K} \textstyle\frac{1}{K+1}\cdot (K+1)\cdot
   \omega_{K}(k)
& = &
1.
\end{array} \]


\marginpar{SS: Are you sure this is ok? What about $\omega_K=\binomial[K](0.5)$? Then we'd want $\omega=\delta_{0.5}$, which has no density? Shouldn't it be convergence in distribution?}
By the Helly theorem~\cite{??} there is a (unique) distribution
$\omega \in\Giry([0,1])$ given as $\omega(M) = \int_{M} f(x)\intd x$
for a pdf $f$ on $[0,1]$ that arises as limit of the
$\overline{\omega}_{K}$. Then, for each $N$,
\[ \begin{array}[b]{rcl}
\omega_{K}(k)
& = &
\big(\drawdelete^{N} \dnib \omega_{K+N}\big)(k)
\\
& = &
\displaystyle\sum_{\ell\leq K+N} \omega_{K+N}(\ell) \cdot \drawdelete^{N}(\ell)(k)
\\[+1em]
& = &
\displaystyle\sum_{\ell\leq K+N} \textstyle \frac{1}{K+N-1} \cdot
   \overline{\omega}_{K+N-1}(\nicefrac{\ell}{K+N}) \cdot \drawdelete^{N}(\ell)(k)
\\[+1em]
& \rightarrow &
\displaystyle\int_{0}^{1} f(x) \cdot \binomial[K](x)(k) \intd x,
   \qquad \mbox{as $N \rightarrow \infty$}
\\
& = &
\big(\binomial[K] \dnib \omega\big)(k).
\end{array} \eqno{\QEDbox} \]



}
\end{proof}

Theorem~\ref{FinettiThm} is a reformulation of a classical result of
de Finetti~\cite{Finetti30}. It introduced the concept of
\emph{exchangeability} of random variables which has disappeared
completely from our reformulation, but is implicit in our use of
multisets. This requires some explanation.

Fix a measurable space $A$ and a distribution $\rho\in\Giry(A)$. A
\emph{random variable} is a function $U\colon A \rightarrow
\R$. Usually, the notation $P(U)$ is used for the image distribution
$\Giry(U)(\rho)$ on $\R$. One often also writes $P(U=k)$ for the
probability $\Giry(U)(\rho)(\{k\})$, for $k\in\R$, and so on.

Let $U_{1},\dots,U_{n}$ be a finite
sequence of random variables. It induces a joint distribution on $\R^n$,
\[ \begin{array}{rcl}
P(U_{1}, \ldots, U_{n})
& \coloneqq &
\Giry\big(U_{1} \times \cdots\times U_{n}\big)
   \big(\rho \otimes \cdots \otimes \rho\big),
\end{array} \]

\noindent where $U_{1}\times\cdots\times U_{n} \colon A^n\rightarrow \R^{n}$ and $(\rho \otimes \cdots
\otimes \rho) \in \Giry(A^n)$ the product measure. 

Such a sequence $(U_{i})$ is called \emph{exchangeable}
if for each permutation $\pi$:
\[ \begin{array}{rcl}
P(U_{1}, \ldots, U_{n})
& = &
P(U_{\pi(1)}, \ldots, U_{\pi(n)}).
\end{array} \]

\noindent An infinite sequence is called exchangeable if each finite
subsequence is exchangeable.

The original form of de Finetti's theorem involves binary (0-1) random
variables with $U_{i} \colon A_{i} \rightarrow 2 = \{0,1\}
\hookrightarrow \R$. The distribution $P(U_{i})$ is then on a
`Bernouilli' distribution on $2$. When random variables are
exchangeable their order doesn't matter but only how many of them are
zero or one. This is captured by their sum $\sum_{i}U_{i}$. But this
sum is a \emph{multiset} over $2$. Indeed, in a multset, the order of
elements does not matter, only their multiplicities (numbers of
occurrences). This is a crucial observation underlying our
reformulation, that already applied to the accumulator map
$\mathsl{acc}$ in~\eqref{AccDiag}. But let's first look at the usual
formulation of de Finetti's (binary) representation theorem,
see~\cite{Finetti30} and the textbooks~\cite{Feller70,Klenke13}.

\begin{theorem}
\label{OriginalFinettiThm}
Let $U_{1}, U_{2}, \ldots$ be an infinite sequence of exchangeable
binary random variables. Then there is a probability measure $\omega$
on $[0,1]$ such that for all~$k\leq K$,
\[ \begin{array}{rcl}
P(\sum_{i=1}^KU_{i}=k)
& = &
\displaystyle\int_{0}^{1} r^{k}(1-r)^{(K-k)}\,\omega(\intd r).
\end{array} \]
\end{theorem}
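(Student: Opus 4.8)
The plan is to read Theorem~\ref{OriginalFinettiThm} off from the limit reformulation, Theorem~\ref{FinettiThm}, instantiated with apex $Y = 1$. First I would turn the exchangeable sequence into a cone over the draw-and-delete chain~\eqref{eqn:chain}. Fix the ambient distribution $\rho\in\Giry(A)$. For each $K$, the map $U_{1}\times\cdots\times U_{K}\colon A^{K}\to 2^{K}$ sends the product measure $\rho\otimes\cdots\otimes\rho$ to a joint law on $2^{K}$; post-composing with the accumulator $\mathsl{acc}\colon 2^{K}\to\Mlt[K](2)$ of~\eqref{AccDiag} gives a distribution $\omega_{K}\in\Dst(\Mlt[K](2))$, which under the identification $\Mlt[K](2)\cong\{0,\dots,K\}$ is simply $\omega_{K}(k)=P(\sum_{i=1}^{K}U_{i}=k)$.

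Second --- and this is the only place the hypothesis is used --- I would verify the cone condition $\drawdelete\dnib\omega_{K+1}=\omega_{K}$. The key observation is combinatorial: composed with $\mathsl{acc}$, the draw-and-delete channel $\drawdelete\colon\Mlt[K+1](2)\chanto\Mlt[K](2)$ coincides with ``delete a uniformly chosen coordinate'' $2^{K+1}\chanto 2^{K}$ followed by $\mathsl{acc}$, because the weights $\varphi(x)/\sum_{y}\varphi(y)$ in~\eqref{DrawDiag} are exactly the probabilities that a uniformly chosen position holds colour $x$. Now pushing the joint law of $(U_{1},\dots,U_{K+1})$ through ``delete a uniform coordinate'' gives, by exchangeability, the same distribution as deleting the last coordinate, i.e.\ the joint law of $(U_{1},\dots,U_{K})$; accumulating then yields $\omega_{K}$. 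This is the random-variable counterpart of the computation in Lemma~\ref{ExchCoalgLem}, and carries no real difficulty.

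Finally, I would apply Theorem~\ref{FinettiThm} with $Y=1$. There a cone is precisely a compatible family of distributions $(\omega_{K})$, and a mediating channel $1\chanto[0,1]$ is precisely a measure $\omega\in\Giry([0,1])$; the theorem therefore produces a unique such $\omega$ with $\binomial[K]\dnib\omega=\omega_{K}$ for every $K$. Spelling out the binomial channel from Section~\ref{BackgroundSec} turns this equality at argument $k$ into the integral representation asserted in the statement. I do not expect a genuine obstacle here: the analytic content --- existence of the representing measure, i.e.\ Hausdorff's moment theorem (Theorem~\ref{HausdorffThm}) --- has already been absorbed into Theorem~\ref{FinettiThm}, so beyond the bookkeeping above nothing remains. (Alternatively one could bypass the limit theorem and argue directly, taking $a_{K}=P(U_{1}=\cdots=U_{K}=1)$ as candidate moments and checking complete monotonicity via $(-1)^{j}(\Delta^{j}a)_{n}=P(U_{1}=\cdots=U_{n}=1,\ U_{n+1}=\cdots=U_{n+j}=0)\ge 0$, which is again just exchangeability.)
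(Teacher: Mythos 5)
Your plan matches the route the paper itself gestures at but never carries out: Theorem~\ref{OriginalFinettiThm} is quoted as the classical result (with references to Feller and Klenke), and the surrounding text only sketches the connection, identifying the accumulator map $\mathsl{acc}\colon 2^{K}\to\Mlt[K](2)$ as the bridge between exchangeable random variables and distributions on multisets and asserting that the classical theorem ``becomes'' the limit statement of Theorem~\ref{FinettiThm}. Your three steps --- push the joint laws forward along $\mathsl{acc}$ to get $\omega_{K}\in\Dst(\Mlt[K](2))$; verify the cone condition from the identity $\drawdelete\klafter\mathsl{acc} = \mathsl{acc}\after(\text{delete a uniformly chosen coordinate})$ together with permutation-invariance of the joint law; instantiate Theorem~\ref{FinettiThm} at $Y=1$ --- are exactly the missing bookkeeping, and each is correct. (The informal discussion at the start of Section~\ref{DrawSubsec}, that discarding the last toss is the same as discarding a ball uniformly at random, is precisely your combinatorial observation; your fallback via complete monotonicity of $a_{K}=P(U_{1}=\cdots=U_{K}=1)$ is also sound and is essentially Claim~2 of the paper's proof of Theorem~\ref{FinettiThm} transported back to random variables.)

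One concrete discrepancy you should not paper over: what your argument delivers is
\[
P\Big(\textstyle\sum_{i=1}^{K}U_{i}=k\Big)
\;=\;
\big(\binomial[K]\dnib\omega\big)(k)
\;=\;
\textstyle\binom{K}{k}\displaystyle\int_{0}^{1}r^{k}(1-r)^{K-k}\,\omega(\intd r),
\]
with the binomial coefficient. The displayed equation in the statement omits $\binom{K}{k}$, and as written it is false already for i.i.d.\ fair coins ($K=2$, $k=1$ gives $\tfrac12$ on the left and $\tfrac14$ on the right); the coefficient-free identity is the one for a single configuration, $P(U_{1}=\cdots=U_{k}=1,\ U_{k+1}=\cdots=U_{K}=0)$. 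So your derivation proves the correct standard form, and the final ``spelling out'' step should flag that the stated equation is missing the factor $\binom{K}{k}$ rather than claim to land on it exactly.
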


It is clear that there is a considerable gap between this original
formulation and the inverse limit formulation that has been developed
in this paper. There is room for narrowing this gap.

The essence that is used in de Finetti's theorem about $n$
exchangeable binary random variables are their outcomes via the
accumulator map $\mathsl{acc}$ from~\eqref{AccDiag}, giving a map
$\Dst(\mathsl{acc}) \colon \Dst(2^{n}) \rightarrow \Dst(\Mlt[n](2))$.


In our approach we have reformulated the requirements about random variables and
about exchangeability and worked directly with distributions on
multisets, see Proposition~\ref{BinomChainProp} and
Theorem~\ref{FinettiThm}. De Finetti's
(binary) representation Theorem~\ref{OriginalFinettiThm} becomes a limit
statement in a Kleisli category. We think that this approach brings
forward the mathematical essentials.

\section{A final exchangeable coalgebra}\label{ExchangeSec}

We return to the setting of coalgebras, begun in
Section~\ref{PolyaSec}.  Any coalgebra $h\colon X\chanto 2\times X$
for the functor $2\times (-)$ on $\Kl(\Dst)$ or on $\Kl(\Giry)$ is a
probabilistic process that produces a random infinite sequence of
boolean values.  Following~\cite{KerstanK12}, we can understand this
induced sequence as arising from the fact that the pair of head and
tail function
\[ \xymatrix{
  2^\infty\ar[r]^-{\cong} & 2\times 2^\infty
} \]

\noindent is a final coalgebra in the category $\Kl(\Giry)$, and so there is a unique channel
$h^\sharp \colon X\chanto 2^\infty$ assigning to each element of $X$
the random sequence that it generates. Here $2^\infty$ is the
measurable space of infinite binary streams, regarded with the product
$\sigma$-algebra.

It is helpful to spell this construction out a little.
The sequence of first projections
\begin{equation}
\label{eqn:kolmogorov}
\vcenter{\xymatrix{
2^0 & 2^1\ar[l] & 2^2\ar[l] & 2^3\ar[l] & \cdots\ar[l]
}}
\end{equation}

\noindent has as a limit the space $2^\infty$, with the limiting cone
being the obvious projections.  And this limit is also a limit in
$\Kl(\Giry)$. (This is a categorical formulation of Kolmogorov's
extension theorem, see~\cite[Thm.~2.5]{DanosG15}.)

Now, as usual in coalgebra theory, we find $h^\sharp\colon X\chanto
2^\infty$ for any coalgebra $h\colon X\chanto 2\times X$ by noticing
that the channels $h^\sharp_K\colon X\chanto 2^K$
\eqref{eqn:channelsequence} define a cone over the chain
\eqref{eqn:kolmogorov}.  The limiting channel $h^\sharp\colon X\chanto
2^\infty$ for the cone is the unique coalgebra homomorphism to the
final coalgebra.

\begin{example}
\label{CoinCoalgEx}
We define the \emph{bernoulli coalgebra} to be the channel $\coin
\colon [0,1]\chanto 2\times [0,1]$ given by $\coin(r) \coloneqq r\cdot
\ket{1,r} + (1-r)\cdot \ket{0,r}$.  In words, $\coin(r)$ makes a coin
toss with known bias~$r$ and returns the outcome together with the same
bias~$r$ unchanged.

The final channel $\iid \coloneqq \coin^{\sharp} \colon[0,1]\chanto
2^\infty$ takes a probability $r$ to a random infinite sequence of
independent and identically distributed binary values, all distributed
as $r\ket 1+(1-r)\ket 0$. More explicitly, for basic measurable
subsets, at each stage $n$,
\[ \begin{array}{rclcrcl}
\iid(r)\big(2^{n} \times \{1\} \times 2^{\infty}\big)
& = &
r
\quad\mbox{and}\quad
\iid(r)\big(2^{n} \times \{0\} \times 2^{\infty}\big)
& = &
1-r.
\end{array} \]
\end{example}

So, any coalgebra defines a cone over~\eqref{eqn:kolmogorov}.  We have
shown in Lemma~\ref{ExchCoalgLem} that any \emph{exchangeable}
coalgebra defines a cone over the draw-delete chain~\eqref{eqn:chain}.
We use this to show that the bernoulli coalgebra $\coin\colon
[0,1]\chanto 2\times [0,1]$ is the final exchangeable coalgebra.

The final coalgebra $2^\infty\chanto 2\times 2^\infty$ is not
exchangeable.  To see this, consider the infinite alternating sequence
$01010101\dots\in 2^\infty$, so that the left hand side of
\eqref{eqn:exch} at the sequence gives $(0,1,010101\dots)$, whereas
the right hand side of \eqref{eqn:exch} at the sequence gives
$(1,0,010101\dots)$, which is different.

The bernoulli coalgebra $\coin \colon [0,1]\chanto 2\times [0,1]$
\emph{is} exchangeable, because
\[ \begin{array}{rcl}
(\idmap\otimes \coin)\dnib \coin(r)
& = &
r^2\bigket{1,1,r} \;+
\\
& & 
(1-r)r\bigket{0,1,r} + r(1-r)\ket{1,0,r} \; +
\\
& & 
(1-r)^2\bigket{0,0,r}\text.
\end{array} \]

\noindent Notice that reordering the outputs does not change the statistics.

\begin{theorem}
\label{FinalThm}
The bernoulli coalgebra $\coin \colon [0,1]\chanto 2\times [0,1]$ is
the final exchangeable coalgebra.
\end{theorem}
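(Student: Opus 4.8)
The plan is to transport the limit of Theorem~\ref{FinettiThm} along the iterated-draw construction of Section~\ref{sec:iteration-coalg}. Two observations set the stage: $\coin$ is exchangeable (as checked just above), hence an object of the category at hand; and iterating $\coin$ merely performs independent coin tosses, so the cone $(\coin_{K})_{K}$ coincides with the binomial cone $(\binomial[K])_{K}$ of Theorem~\ref{FinettiThm} (up to the canonical relabelings $\Mlt[K](2)\cong\{0,1,\dots,K\}$), and in particular is a limiting cone with vertex $[0,1]$.

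\emph{Existence of a mediating homomorphism.} Let $h\colon X\chanto 2\times X$ be an exchangeable coalgebra. By Lemma~\ref{ExchCoalgLem}(2) the channels $h_{K}\colon X\chanto\Mlt[K](2)$ form a cone over the draw-and-delete chain~\eqref{eqn:chain}, so by Theorem~\ref{FinettiThm} there is a unique channel $f\colon X\chanto[0,1]$ with $\binomial[K]\klafter f = h_{K}$ for all $K$. It remains to verify that $f$ is a coalgebra homomorphism from $h$ to $\coin$, i.e.\ $(\idmap\otimes f)\klafter h = \coin\klafter f$ as channels $X\chanto 2\times[0,1]$. Since $2\times(-)\cong(-)+(-)$ on $\Kl(\Giry)$, the chain $K\mapsto 2\times\Mlt[K](2)$ with connecting maps $\idmap\otimes\drawdelete$ is the coproduct of two copies of~\eqref{eqn:chain}; applying the argument of Theorem~\ref{FinettiThm} to each summand, $2\times[0,1]$ with the cone $(\idmap\otimes\binomial[K])_{K}$ is again a limit of this chain, so it suffices to check the equality after post-composing with $\idmap\otimes\binomial[K]$ for every $K$. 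The left-hand side then becomes $(\idmap\otimes h_{K})\klafter h$; and a short calculation with the explicit formulas shows that the draw channel $\draw\colon\Mlt[K+1](2)\chanto 2\times\Mlt[K](2)$ of~\eqref{DrawDiag} satisfies $\draw\klafter\binomial[K+1] = (\idmap\otimes\binomial[K])\klafter\coin$, so the right-hand side becomes $\draw\klafter\binomial[K+1]\klafter f = \draw\klafter h_{K+1}$. Thus the whole matter reduces to the identity
\[
(\idmap\otimes h_{K})\klafter h \;=\; \draw\klafter h_{K+1}.
\]
Rewriting both sides through the order-recording channels $h^{\sharp}_{K+1}\colon X\chanto 2^{K+1}$ of~\eqref{eqn:channelsequence}: the left side pulls the first coordinate out of $h^{\sharp}_{K+1}(x)$ and accumulates the remaining $K$ coordinates, whereas the right side pulls a uniformly random coordinate out and accumulates the rest; permutation-invariance of $h^{\sharp}_{K+1}$ — the iterated form of exchangeability of $h$, as in the proof of Lemma~\ref{ExchCoalgLem}(1) — makes these agree.

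\emph{Uniqueness, and the main obstacle.} Conversely, let $g\colon X\chanto[0,1]$ be any coalgebra homomorphism from $h$ to $\coin$. An easy induction on $K$ shows the iterated coalgebra is natural in $g$, namely $(\idmap\otimes g)\klafter h^{K} = \coin^{K}\klafter g$; taking first projections gives $h_{K} = \coin_{K}\klafter g = \binomial[K]\klafter g$ for all $K$, so $g = f$ by the uniqueness clause of Theorem~\ref{FinettiThm}. Hence $f$ is the unique homomorphism $X\chanto[0,1]$ and $\coin$ is final among exchangeable coalgebras. I expect the crux to be the displayed identity $(\idmap\otimes h_{K})\klafter h = \draw\klafter h_{K+1}$: it is the one place where exchangeability of $h$ is genuinely used, and because the coalgebra state keeps changing while the experiment runs one cannot permute the outcomes naively but must first pass to the ordered channels $h^{\sharp}$. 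The subsidiary claim that $2\times(-)$ preserves the limit of Theorem~\ref{FinettiThm} is routine but deserves to be spelled out, e.g.\ via the coproduct decomposition above together with the fact, noted at the end of the proof of Theorem~\ref{FinettiThm}, that $\Giry([0,1])$ is a subspace of $[0,1]^{\infty}$.
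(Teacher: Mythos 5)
Your proof is correct in outline, but it takes a genuinely different route from the paper's. The paper does not verify the homomorphism square $(\idmap\otimes f)\klafter h=\coin\klafter f$ directly: it invokes the final coalgebra $2^\infty\chanto 2\times 2^\infty$ in $\Kl(\Giry)$ (via the Kolmogorov-extension limit of \eqref{eqn:kolmogorov}), observes that $\iid=\coin^\sharp\colon[0,1]\chanto 2^\infty$ is a monomorphism because the $K$-th moment of $\omega$ is recoverable as $(\iid\dnib\omega)(\{1\}^K\times 2^\infty)$ and Hausdorff's theorem gives uniqueness from moments, and then concludes: since $2\times(-)$ preserves monos, the factorisation of the anamorphism $h^\sharp$ through $\iid$ as a mere channel (supplied by Theorem~\ref{FinettiThm}) is automatically a factorisation as a coalgebra homomorphism. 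That argument gets the homomorphism property ``for free'' from finality of $2^\infty$, at the price of importing that final-coalgebra machinery. You instead test the square against the family $(\idmap\otimes\binomial[K])_K$ and reduce it to the two concrete identities $\draw\klafter\binomial[K+1]=(\idmap\otimes\binomial[K])\klafter\coin$ and $(\idmap\otimes h_K)\klafter h=\draw\klafter h_{K+1}$; both check out (the first is a direct binomial computation, the second is exactly where permutation-invariance of $h^{\sharp}_{K+1}$ enters), and your uniqueness argument via $h_K=\binomial[K]\klafter g$ is fine. What your route buys is self-containedness relative to Theorem~\ref{FinettiThm} and an explicit picture of where exchangeability is used; what it costs is the subsidiary claim you flag, that $(\idmap\otimes\binomial[K])_K$ is a jointly monic (or limiting) cone on $2\times[0,1]$. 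Do be careful there: limits do not commute with coproducts in general, so the reduction to two copies of \eqref{eqn:chain} needs the subprobability-normalisation argument (the connecting maps preserve the mass of each summand, so a cone into the doubled chain splits into two constant-mass sub-cones); morally this is the same moment-determinacy fact the paper uses to see that $\iid$ is monic, just deployed at $2\times[0,1]$ instead of $2^\infty$. With that lemma spelled out, your proof is complete.
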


\begin{proof}
Let $h\colon X \chanto 2\times X$ be an exchangeable coalgebra. By
Lemma~\ref{ExchCoalgLem} the associated maps $h_{K} \colon X \chanto
\Mlt[K](2)$ form a cone. By de Finetti's theorem in limit form ---
that is, by Theorem~\ref{FinettiThm} --- we get a unique mediating
channel $f\colon X \chanto [0,1]$ with $\binomial[K] \klafter f =
h_{K}$ for each $K$. Our aim is to show that $f$ is a unique coalgebra
map: $\coin \klafter f = (\idmap\otimes f) \klafter h$.

We do know that the limiting map $\smash{\xymatrix{X\ar[r]|-\circ ^-f
    & [0,1]\ar[r]|-\circ^-\iid&2^\infty}}$ is a unique coalgebra
homomorphism.  So to show that $f$ is a unique coalgebra homomorphism,
it suffices to show that $\iid$ is a monomorphism in $\Kl(\Giry)$.

To see this, first notice that the $K$-th moment of some distribution
$\omega\in\Giry([0,1])$ is the probability
$(\iid\dnib\omega)(\{1\}^K\times 2^\infty)$, and so if two random
sequences are equal, $\iid \dnib \omega_1=\iid\dnib\omega_2$, then in
particular the moments of $\omega_1$ and $\omega_2$ are equal and so
$\omega_1=\omega_2$.  Moreover the endofunctor $2\times-$ preserves
monomorphisms.  So if the final channel $X\chanto 2^\infty$ factors
through $\iid:[0,1]\chanto 2^\infty$ as a channel then it also factors
through as a coalgebra homomorphism, necessarily uniquely, by the
definition of monomorphism. \QED
\end{proof}

  \begin{example}
    We return to the example of P\'olya's urn coalgebra
    $\mathsl{pol}\colon \neMlt(2)\chanto 2\times \neMlt(2)$. 
    We have shown in Proposition~\ref{PolyaIteratedProp} that the cone $\mathsl{pol}_K:\neMlt(2)\to \Mlt[K](2)$ defined by iterated draws 
    is given by the beta-bernoulli distributions: $\mathsl{pol}_K(\varphi)=\mathsl{betabern}[K](\varphi(0),\varphi(1))$,
    and we have shown in Proposition~\ref{BetaBinomLem} that the 
    beta-bernoulli cone over~\eqref{eqn:chain} 
    has mediating map the beta distribution.
    So the final channel
    $\textit{beta}:\neMlt(2)\chanto [0,1]$
    is given by putting $\textit{beta}(\varphi)$ as the beta distribution with parameters
    $(\varphi(0),\varphi(1))$,
    and this is a coalgebra homomorphism.
    The commuting diagram
    \[\xymatrix{
        \ar[d]|-\circ_{\textit{beta}}\neMlt(2)\ar[r]|-\circ ^-{\textsl{pol}}&2\times \neMlt(2)\ar[d]|-\circ^{2\times \textit{beta}}
        \\
        [0,1]\ar[r]|-\circ_-\coin &2\times[0,1]}
    \]
    expresses the `conjugacy' between the bernoulli and beta distributions,
see also~\cite{Jacobs20a}.
\end{example}

\section{Conclusions and further work}

This paper has translated the classical representation theorem of de
Finetti into the language of multisets and distributions and turned it
into a limit theorem. This limit is subsequently used for a finality
result for exchangeable coalgebras.

We have concentrated, as is usual for the de Finetti theorem, on the
binary case, giving a limit statement for the special case $X = 2 =
\{0,1\}$ of the cone in Proposition~\ref{BinomChainProp}. There is an
obvious question whether this formulation can be extended to $X =
\{0,1,\ldots,n-1\}$ for $n > 2$, or to more general measurable
spaces. We see two possible avenues:
\begin{itemize}
\item extension to finite sets via a multivariate version of
Hausdorff's moments theorem, see~\cite{KleiberS13};

\item extension to Polish spaces, following~\cite{DahlqvistDG16} and
  its categorical justification for extending de Finetti's theorem to
  such spaces (esp.\ Theorems~35 and~40) in terms of exchangeability.
\end{itemize}

\noindent Another point for future work is to investigate whether the
categorical formulation of de Finetti's theorem could be taken as an
axiom for a `synthetic probability theory' (\eg~following
\cite{Fritz20,FritzR19,ScibiorKVSYCOMHG18}), and indeed whether it
holds in some proposed categorical models which do support
de-Finetti-like arguments (\cite{StatonKSY17,StatonSYAFR15}).

Finally, we note that chains of channels between multisets also arise
in recent exponential modalities for linear
logic~\cite{DahlqvistK20,hamano-exponential}; the connections remain
to be worked out.

\paragraph{Acknowledgements.}
Staton is supported by a Royal Society Fellowship, and has enjoyed
discussions about formulations of de Finetti's theorem with many
people including coauthors on~\cite{StatonSYAFR15}.

\bibliography{bib}

\begin{thebibliography}{10}

\bibitem{Xrp2016}
N.~Ackerman, C.~Freer, and D.~Roy.
\newblock Exchangeable random primitives.
\newblock In {\em Proc.~PPS 2016}, 2016.

\bibitem{DahlqvistDG16}
F.~Dahlqvist, V.~Danos, and I.~Garnier.
\newblock Robustly parameterised higher-order probabilistic models.
\newblock In J.~Desharnais and R.~Jagadeesan, editors, {\em CONCUR 2016 --
  Concurrency Theory}, volume~59 of {\em LIPIcs}, pages 23:1--23:15. Schloss
  Dagstuhl, 2016.

\bibitem{DahlqvistK20}
F.~Dahlqvist and D.~Kozen.
\newblock Semantics of higher-order probabilistic programs with conditioning.
\newblock In {\em Princ. of Programming Languages}, pages 57:1--57:29. ACM
  Press, 2020.
\newblock See \url{doi.org/10.1145/3371125}.

\bibitem{DanosG15}
V.~Danos and I.~Garnier.
\newblock Dirichlet is natural.
\newblock In D.~Ghica, editor, {\em Math. Found. of Programming Semantics},
  number 319 in Elect. Notes in Theor. Comp. Sci., pages 137--164. Elsevier,
  Amsterdam, 2015.

\bibitem{Finetti30}
B.~de~Finetti.
\newblock Funzione caratteristica di un fenomeno aleatorio.
\newblock {\em Memorie della R. Accademia Nazionale dei Lincei}, IV, fasc.
  5:86--113, 1930.
\newblock Available at
  \url{www.brunodefinetti.it/Opere/funzioneCaratteristica.pdf}.

\bibitem{Feller70}
W.~Feller.
\newblock {\em An Introduction to Probability Theory and its Applications},
  volume~II.
\newblock Wiley, 1970.

\bibitem{Fritz20}
T.~Fritz.
\newblock A synthetic approach to {Markov} kernels, conditional independence,
  and theorems on sufficient statistics.
\newblock {\em Advances in Math.}, 370:107239, 2020.
\newblock See \url{arxiv.org/abs/1908.07021}.

\bibitem{FritzR19}
T.~Fritz and E.~Rischel.
\newblock The zero-one laws of {Kolmogorov} and {Hewitt}--{Savage} in
  categorical probability.
\newblock See \url{arxiv.org/abs/1912.02769}, 2019.

\bibitem{hamano-exponential}
M.~Hamano.
\newblock A linear exponential comonad in s-finite transition kernels and
  probabilistic coherent spaces.
\newblock \url{arxiv:1909.07589}, September 2019.

\bibitem{Hausdorff21}
F.~Hausdorff.
\newblock Summationsmethoden und {M}omentfolgen {I}.
\newblock {\em Math. Zeitschr.}, 9:74--109, 1921.

\bibitem{StatonKSY17}
C.~Heunen, O.~Kammar, S.~Staton, and H.~Yang.
\newblock A convenient category for higher-order probability theory.
\newblock In {\em Logic in Computer Science}, pages 1--12. IEEE Computer
  Society, 2017.

\bibitem{Jacobs19d}
B.~Jacobs.
\newblock Structured probabilistic reasoning.
\newblock forthcoming book, see
  \url{http://www.cs.ru.nl/B.Jacobs/PAPERS/ProbabilisticReasoning.pdf}, 2019.

\bibitem{Jacobs20a}
B.~Jacobs.
\newblock A channel-based perspective on conjugate priors.
\newblock {\em Math. Struct. in Comp. Sci.}, 30(1):44--61, 2020.

\bibitem{KerstanK12}
H.~Kerstan and B.~K{\"o}nig.
\newblock Coalgebraic trace semantics for probabilistic transition systems
  based on measure theory.
\newblock In M.~Koutny and I.~Ulidowski, editors, {\em CONCUR 2012 --
  Concurrency Theory}, number 7454 in Lect. Notes Comp. Sci., pages 410--424.
  Springer, Berlin, 2012.

\bibitem{KleiberS13}
C.~Kleiber and J.~Stoyanov.
\newblock Multivariate distributions and the moment problem.
\newblock {\em Journ. of Multivariate Analysis}, 113:7--18, 2013.

\bibitem{Klenke13}
A.~Klenke.
\newblock {\em Probability Theory}.
\newblock Springer, 2013.

\bibitem{ScibiorKVSYCOMHG18}
A.~\'{S}cibior, O.~Kammar, M.~V\'{a}k\'{a}r, S.~Staton, H.~Yang, Y.~Cai,
  K.~Ostermann, S.~Moss, C.~Heunen, and Z.~Ghahramani.
\newblock Denotational validation of higher-order {Bayesian} inference.
\newblock In {\em Princ. of Programming Languages}, pages 60:1--60:29. ACM
  Press, 2018.

\bibitem{StatonSYAFR15}
S.~Staton, D.~Stein, H.~Yang, N.~Ackerman, C.~Freer, and D.~Roy.
\newblock The {Beta}-{Bernoulli} process and algebraic effects.
\newblock In I.~Chatzigiannakis, C.~Kaklamanis, D.~Marx, and D.~Sannella,
  editors, {\em Int. Coll. on Automata, Languages and Programming}, volume 107
  of {\em LIPIcs}, pages 141:1--141:15. Schloss Dagstuhl, 2018.

\end{thebibliography}

\end{document}